\titleformat{\section}[block]{\normalfont\bfseries\filcenter}{\itshape\thesection}{1em}{}
\titleformat{\subsection}[block]{\normalfont\bfseries}{\itshape\thesubsection}{0.9em}{}
\titleformat{\subsubsection}[block]{\normalfont\bfseries}{\itshape\thesubsubsection}{0.8em}{}
\titleformat{\caption}[block]{\normalfont}{\itshape}{0.8em}{}
\DeclareRobustCommand{\sign}{\ensuremath \text{sign}}
\newcommand{\tc}{\tilde{c}}
\newcommand{\tq}{\tilde{q}}
\newcommand{\hq}{\hat{q}}
\newcommand{\hP}{\hat{P}}
\newcommand{\E}{\mathbb{E}}
\newcommand{\R}{\mathbb{R}}
\newcommand{\mL}{\mathcal{L}}
\newcommand{\mW}{\mathcal{W}}
\newcommand{\p}{\partial}
\newcommand{\Lip}{{\rm Lip}}
\renewcommand{\p}{\partial}
\renewcommand{\d}{{\rm d}}
\newcommand{\tP}{\tilde{P}}
\newcommand{\Tr}{{\rm Tr}}
\numberwithin{equation}{section}
\newtheorem{theoreme}{Theorem}[section]
\newtheorem{proposition}[theoreme]{Proposition}
\newtheorem{lemme}[theoreme]{Lemma}
\newtheorem{definition}[theoreme]{Definition}
\newtheorem*{remarque}{Remark}
\begin{document}

\title[Hypoelliptic SDE with singular drift]{Weak well posedness for hypoelliptic stochastic differential equation with singular drift: a sharp result}

\author{P.E. Chaudru de Raynal}
\address{UNIVERSITE SAVOIE MONT BLANC, LAMA. }
\email[]{pe.deraynal@univ-savoie.fr}

%\keywords {McKean-Vlasov processes; smoothing effect; non-linear PDE; regularization by noise} 

\begin{abstract}
In this paper, we prove weak uniqueness of hypoelliptic stochastic differential equation with Hölder drift, with Hölder exponent strictly greater than 1/3. We then extend to a weak framework the previous work \cite{chaudru_strong_2012} where strong uniqueness was proved when the Hölder exponent is strictly greater than 2/3. We also show that this result is sharp, by giving a counter example to weak uniqueness when the Hölder exponent is just below 1/3.

Our approach is based on martingale problem formulation of Stroock and Varadhan and is based on smoothing properties of the associated PDE.
\end{abstract}
\maketitle
\section{Introduction}
Let $d$ be a positive integer and $\mathcal{M}_{d}(\mathbb{R})$ be the set of $d\times d$ matrices with real coefficients. For a given positive $T$, given measurable functions $F_1,F_2,\sigma: [0,T] \times \mathbb{R}^d \times \mathbb{R}^d  \to \mathbb{R}^d \times \mathbb{R}^d \times \mathcal{M}_{d}(\mathbb{R})$ and $(W_{t}, t\geq 0)$ a standard $d$-dimensional Brownian motion defined on some filtered probability space $(\Omega, \mathcal{F}, \mathbb{P}, (\mathcal{F}_t)_{ t \geq 0})$ we consider the following $\mathbb{R}^d \times \mathbb{R}^d$ system for any $t$ in $[0,T]$:

\begin{equation}\label{systemEDO}
\left\lbrace \begin{array}{llll}
\d X^1_t = F_1(t,X^1_t,X^2_t)\d t + \sigma(t,X^1_t,X_t^2) \d B_t,\qquad & X^1_0=x_1,\\
\d X^2_t = F_2(t,X^1_t,X^2_t)\d t,\qquad & X^2_0=x_2,\\
\end{array}
\right.
\end{equation}
where $x_1$ and $x_2$ belong to $\R^d$ and where the diffusion matrix $a:=\sigma \sigma^*$ is\footnote{The notation ``$*$'' stands for the transpose.} supposed to be uniformly elliptic.

In this work, we aim at proving that this system is well-posed (\emph{i.e.} there exists a unique solution), in the weak sense, when the drift is singular. Indeed, in that case, uniqueness of the associated martingale problem from Stroock and Varadhan's theory \cite{stroock_multidimensional_1979} fails since the noise of the system degenerates. We nevertheless show that under a suitable H\"{o}lder assumption on the drift, Lipschitz condition on the diffusion matrix, and hyppoellipticity condition on the system, weak well-posedness holds for \eqref{systemEDO}. By suitable, we mean that there exists a threshold for the Hölder-continuity of the drift with respect to (w.r.t.) the degenerate argument. This Hölder-exponent is supposed to be strictly greater than $1/3$. We also show that this threshold is sharp thanks to a counter-example when the Hölder exponent is strictly less than $1/3$.\\

\textbf{Mathematical background.}  It may be a rel challenge to show well-posedness of a differential system with drift less than Lipschitz (see \cite{diperna_ordinary_1989} for a work in that direction).  The Peano example is a very good illustration of this phenomenon: for any $\alpha$ in $(0,1)$ the equation 
\begin{equation}
\d Y_t = \sign(Y_t)|Y_t|^\alpha \d t,\ Y_0=0, \quad t\in [0,T],
\end{equation} 
as an infinite number of solutions of the form $\pm c_\alpha (t-t^\star)^{1/(1-\alpha)} \mathbf{1}_{[t^\star;+\infty)},\ t^\star \in [0,T]$. Nevertheless, it has been shown that this equation is well-posed (in a strong and weak sense) as soon as it is infinitesimally perturbed by a Brownian motion. More precisely, the equation
\begin{equation}\label{eq:peano1}
\d Y_t = b(Y_t) \d t + \d B_t,\ Y_0=0, \quad t\in [0,T],
\end{equation} 
admits a unique strong solution (\emph{i.e.} there exists an almost surely unique solution adapted to the filtration generated by the Brownian motion) as soon as the function $b: \R^d \ni x \mapsto b(x)\in \R^d$ is measurable and bounded. This phenomenon is known as \emph{regularization by noise.}

Regularization by noise of systems with singular drift has been widely studied in the past few years. Since the pioneering one dimensional work of Zvonkin \cite{zvonkin_transformation_1974} and its generalization to the multidimensional setting by Veretenikov \cite{veretennikov_strong_1980} (where stochastic system with bounded drift and additive noise are handled), several authors extended the result. Krylov and Röckner \cite{krylov_strong_2005} showed that SDE with additive noise and $\mathbb{L}_p$ drift (where $p$ depends on the dimension of the system) are also well-posed and Zhang \cite{zhang_strong_2005} proved the case of multiplicative noise with uniformly elliptic and Sobolev diffusion matrix. More recently, Flandoli, Issoglio and Russo \cite{flandoli_multidimensional_2014} and Delarue and Diel \cite{delarue_rough_2015} studied the case of weak well-posedness of \eqref{eq:peano1} for a distributional drift (\emph{i.e.} in the Hölder space $C^\alpha$ where $\alpha$ is respectively greater than $-1/3$ and $-2/3$, see \cite{hairer_introduction_2015} for a definition of such a space) and Catellier and Gubinelli \cite{catellier_averaging_2012} considered systems perturbed by fractional Brownian motion. We refer to the notes of Flandoli \cite{flandoli_random_2011} for a general account on this topics.\\

In our case the setting is a bit different since the noise added in the system acts only by mean of random drift (\emph{i.e.} the system degenerates). Indeed, the archetypal example of system \eqref{systemEDO} writes
\begin{equation}\label{systemEDOex}
\d X^2_t = \big(B_t + F_2(X^2_t)\big)\d t,\qquad  X^2_0=x_2,
\end{equation}
where the function $F_2$ is supposed to be only Hölder-continuous. Thus, the system can be seen as a classical ODE whose drift is perturbed by a Brownian motion: the perturbation is then of macroscopic type. We hence consider a \emph{regularization by stochastic drift.}
 
The first work in that direction is due to Chaudru de Raynal \cite{chaudru_strong_2012} where strong well-posedness of \eqref{systemEDO} is proved when the drift is Hölder continuous with Hölder exponent w.r.t. the degenerated argument strictly greater than $2/3$ and where the system is also supposed to be hyppoelliptic. Since then, several Authors have studied the strong well-posedness of \eqref{systemEDO} with different approaches and have obtained, with weaker conditions, the same kind of threshold: in \cite{wang_degenerate_2015}, the Authors used an approach based on gradient estimates on the associated semi-group to show that the system is strongly well-posed when the drift satisfies a Hölder-Dini condition with Hölder exponent of 2/3 w.r.t. the degenerate component; in \cite{fedrizzi_regularity_2016}, the Authors used a PDE approach and obtained strong well-posedness as soon as the drift is weakly differentiable in the degenerate direction, with order of derivation of 2/3. This work then ``extends'' the results to the case of a threshold of $1/3$ for weak well-posedness and shows that the result obtained is sharp thanks to a counter-example.\\

\textbf{Strategy of proof.} Our strategy relies on the martingale problem approach of Stroock and Varadhan \cite{stroock_multidimensional_1979}. We indeed know that under our assumptions the system \eqref{systemEDO} admits at least a weak solution. We then show that this solution is unique. To do so, we investigate the regularity of the (mild) solution of the associated PDE. Namely, denoting by ${\rm Tr}(a)$ the trace of the matrix $a$, ``$\cdot$'' the standard Euclidean inner product on $\R^d$ and  $\mL$ the generator of \eqref{systemEDO}:
\begin{eqnarray}
\mL &:=& \frac{1}{2}{\rm Tr}(a(t,x_1,x_2)D^2_{x_1})  + \left[F_1(t,x_1,x_2)\right] \cdot \left[D_{x_1}\right] + \left[F_2(t,x_1,x_2)\right] \cdot \left[D_{x_2}\right],\label{gengen}
\end{eqnarray} 
we exhibit a ``good'' theory for the PDE 
\begin{equation}\label{eq:thepde}
(\p_t+\mathcal{L})u = f
\end{equation}
set on the cylinder $[0,T)\times \R^{2d}$ with terminal condition $0$ at time $T$ and where the function $f$ belongs to a certain class of functions $\mathcal{F}$. 

By ``good'', we mean that we can consider a sequence of classical solutions $(u^n)_{n\geq 0}$ and associated derivative in the non-degenerate direction $(D_{x_1} u^n)_{n\geq 0}$ along a sequence of mollified coefficients $(F_1^n,F_2^n,a^n)_{n\geq 0}$ that satisfy \emph{a priori} estimates depending only on the regularity of $(F_1,F_2,a)$. By using Arzella-Ascoli Theorem, this allows to extract a converging subsequence to the mild solution of \eqref{eq:thepde} on every compact subset of $[0,T]\times \R^{2d}$.

Hence, thanks to Itô's Formula, one can show that the quantity
$$\Big(u(t,X_t^1,X_t^2) - \int_0^t f(s,X_s^1,X_s^2)\Big)_{0\leq t \leq T},$$
is a martingale. By letting the class of function $\mathcal{F}$ be sufficiently rich, this allows us to prove uniqueness of the marginals of the weak solution of \eqref{systemEDO} and then of the law itself.\\

 Here, the crucial point is that the operator is not uniformly parabolic: the second order differentiation operator in $\mathcal{L}$ only acts in the first (and non-degenerate) direction ``$x_1$''. Therefore, we expect a loose of the regularization effect w.r.t. the degenerate component of \eqref{systemEDO}. Nevertheless, we show that the noise still regularizes, even in the degenerate direction, by mean of the random drift: we can benefit from the hypoellipticity of the system.

The system \eqref{systemEDOex} indeed relies on the so-called Kolmogorov example \cite{kolmogorov_zufallige_1934}, which is also the archetypal example of hypoelliptic system without elliptic diffusion matrix. In our setting, the hypoellipticity assumption translates as a non-degeneracy assumption on the derivative of the drift function $F_2$ w.r.t. the first component. Together with the Hölder assumption, this can be seen as a weak Hörmander condition, in reference to the work of Hörmander  \cite{hormander_hypoelliptic_1967} on degenerate operators of divergent form.

Thus, our system appears as a non-linear generalization of Kolmogorov's example. Degenerate operators of this form have been studied by many authors see \emph{e.g.} the works of Di Francesco and Polidoro \cite{di_francesco_schauder_2006}, and Delarue and Menozzi \cite{delarue_density_2010}. We also emphasize that, in \cite{menozzi_parametrix_2011}, Menozzi proved the weak well-posedness of a generalization of \eqref{systemEDO} with Lipschitz drift and Hölder diffusion matrix.

Nevertheless, to the best of our knowledge a ``good'' theory, in the sense mentioned above, for the PDE \eqref{eq:thepde} has not been exhibiting yet. We here prove the aforementioned estimates by using a first order parametrix (see \cite{friedman_partial_1964}) expansion of the operator $\mathcal{L}$ defined by \eqref{gengen}. This parametrix expansion is based on the knowledge of the related linearized and frozen version of \eqref{systemEDO} coming essentially from the previous work of Delarue and Menozzi \cite{delarue_density_2010}.\\

\textbf{Minimal setting to restore uniqueness.} Obviously, all the aforementioned works, as well as this one, lead to the question of the minimal assumption that could be done on the drift in order to restore well-posedness. Having in mind that most of these works use a PDE approach, it seems clear that the assumption on the drift relies on the regularization properties of the semi-group generated by the solution. In comparison with the previous works, the threshold of $1/3$ can be seen as the price to pay to balance the degeneracy of the system: the smoothing effect of the semi-group associated to a degenerate Gaussian process is less efficient than the one of a non-degenerate Gaussian process. We prove that our assumptions are (almost) minimal by giving a counter-example in the case where the drift $F_2$ is Hölder continuous with Hölder exponent just below $1/3$.\\

Although this example concerns our degenerate case, we feel that the method could be adapted in order to obtain the optimal threshold (for the weak well posedness) in other settings. This is the reason why we wrote it in a general form. Let us briefly explain why and expose the heuristic rule behind our counter example.

It relies on the work of Delarue and Flandoli \cite{delarue_transition_2014}. In this paper, the Peano example is investigated: namely, the system of interest is
\begin{equation}\label{eq:peano}
\d Y_t = \sign(Y_t)|Y_t|^\alpha \d t + \epsilon \d B_t,\ Y_0=0, \quad \epsilon >0, \  0<\alpha<1.
\end{equation} 
The Authors studied the zero-noise limit of the system ($\epsilon \to 0$) pathwiselly. When doing so, they put in evidence the following crucial phenomenon: in small time there is a competition between the irregularity of the drift and the fluctuations of the noise. The fluctuations of the noise allow the solution to leave the singularity while the irregularity of the drift (possibly) captures the solution in the singularity. Thus, the more singular the drift is, the more irregular the noise has to be.

This competition can be made explicit. In order to regularize the equation, the noise has to dominate the system in small time. This means that there must exists a time $0<t_{\epsilon}<1$ such that, below this instant, the noise dominates the system and push the solution far enough from the singularity, while above, the drift dominates the system and constrains the solution to fluctuate around one of the extreme solution of the deterministic Peano equation. A good way to see how the instant $t_{\epsilon}$ looks like is to compare the fluctuations of the extreme solution ($\pm t^{1/(1-\alpha)}$) with the fluctuations of the noise. Denoting by  $\gamma$ the order of the fluctuations of the noise this leads to the equation
\begin{equation*}
\epsilon t_\epsilon^{\gamma} = t_\epsilon^{1/(1-\alpha)},
\end{equation*}
which gives $t_\epsilon = \epsilon^{(1-\gamma(1-\alpha))/(1-\alpha)}$ and leads to the condition:
\begin{equation}\label{eq:threshold}
\alpha > 1-1/\gamma.
\end{equation}
The counter example, which also especially compares the fluctuations of the noise with the extreme solution, leads to the same threshold and says that weak uniqueness fails below this ceil.

Obviously, cases where $\alpha<0$ have to be considered carefully. But if we formally consider the case of a Brownian perturbation, we get $\gamma =1/2$ and so $\alpha>-1$, which is the sharp threshold exhibited in the recent work of Beck, Flandoli, Gubinelli and Maureli \cite{beck_stochastic_2014}.

In our setting, as suggested by the example \eqref{systemEDOex}, the noise added in \eqref{eq:peano} can be seen as the integral of a Brownian path, which gives $\gamma=3/2$. We deduce from equation \eqref{eq:threshold} that the threshold for the Hölder-regularity of the drift is $1/3$. We finally emphasize that this heuristic rule gives another (pathwise) interpretation for our threshold in comparison with the one obtained in the non-degenerate cases. Since the noise added in our system degenerates, the fluctuations (which are typically of order $3/2$) are not enough stronger to push the solution far enough from the singularity when the drift is too much singular (say less than $C^{1/3}$). \\

\textbf{Organization of this paper.} This paper is organized as follows. In Section \ref{sec:MR}, we give our main result: weak existence and uniqueness holds for \eqref{systemEDO}. Smoothing properties of PDE \eqref{eq:thepde} are given in Section \ref{sec:pdeandproof} as well as the proof of our main result. Then, we show in Section \ref{sec:counterexample} that our result is sharp thanks to a counter-example. Finally, the regularization properties of the PDE \eqref{eq:thepde} are proved in Section \ref{sec:pde}

\section{Notations, assumptions and main results}\label{sec:MR}
\textbf{Notations.} In order to simplify the notations, we adopt the following convention: $x, y, z,\xi,$ \emph{etc.} denote the $2d-$dimensional real variables $(x_1,x_2), (y_1,y_2), (z_1,z_2), (\xi_1,\xi_2),$ \emph{etc.}. We denote by $g(t,x) = g(t,x_1,x_2)$ any function $g$ from $[0,T] \times\R^{d} \times \R^d$ to $\R^{N},\ N\in \mathbb{N}$ evaluated at point $(t,x_1,x_2)$. Below we sometimes write $X_t = (X_t^1,X_t^2)$and, when necessary, we write $(X^{t,x}_s)_{t\leq s \leq T}$ the process defined by \eqref{systemEDO} which starts from $x$ at time $t$, \emph{i.e.} such that $X_t^{t,x}=x$.

We denote by $\mathcal{M}_d(\R)$ the set of real $d \times d$ matrices, by ``${\rm Id}$'' the identity matrix of $\mathcal{M}_d(\R)$ we denote by $\mathcal{B}$ the $2d \times d$ matrix: $B=(\rm{Id}, 0_{\R^{d}\times \R^d})^*$. We write ${\rm GL}_d(\R)$ the set of $d\times d$ invertible matrices with real coefficients. We recall that $a$ denotes the square of the diffusion matrix $\sigma$, $a :=\sigma \sigma^*$. The canonical Euclidean inner product on $\R^d$ is denoted by ``$\cdot$''. 

Subsequently, we denote by $c$, $C$, $c'$, $C'$, $c''$ \emph{etc.} a positive constant, depending only on known parameters in \textbf{(H)}, given just below, that may change from line to line and from an equation to another.

For any function from $[0,T]\times \R^d \times \R^d$, we use the notation $D$ to denote the total space derivative, we denote by $D_1$ (resp. $D_2$) the derivative with respect to the first (resp. second) $d$-dimensional space component. In the same spirit, the notation $D_{z}$ means the derivative w.r.t the variable $z$. Hence, for all integer $n$, $D^n_{z}$ is the $n^{\rm{th}}$ derivative w.r.t $z$ and for all integer $m$ the $n\times m$ cross differentiations w.r.t $z$, $y$ are denoted by $D^n_{z}D^m_{y}$. Furthermore, the partial derivative $\p/\p_t$ is denoted by $\p_t$.\\

\textbf{Assumptions} \textbf{(H).} We say that assumptions \textbf{(H)} hold if the following assumptions are satisfied.
\begin{description}
\item[(H1) Regularity of the coefficients] there exist $0<\beta_i^j<1$, $1\leq i,j\leq2$ and three positive constants $C_1, C_2, C_\sigma$ such that for all $t$ in $[0,T]$ and all $(x_1,x_2)$ and $(y_1,y_2)$ in $\mathbb{R}^d \times \mathbb{R}^d$
\begin{eqnarray*} 
&&|F_1(t,x_1,x_2) - F_1(t,y_1,y_2)| \leq C_{1} (|x_1-y_1|^{\beta^1_1} + |x_2-y_2|^{\beta^2_1})\\
&&|F_2(t,x_1,x_2) - F_2(t,y_1,y_2)| \leq C_2(|x_1-y_1|+|x_2-y_2|^{\beta^2_2})\\
&&|\sigma(t,x_1,x_2) - \sigma(t,y_1,y_2)| \leq C_\sigma (|x_1-y_1| + |x_2-y_2|).
\end{eqnarray*}
Moreover, the coefficients are supposed to be continuous w.r.t the time and the exponents $\beta_i^2,\ i=1,2$ are supposed to be strictly greater than $1/3$. Thereafter, we set $\beta_2^1=1$ for notational convenience.
\item[(H2) Uniform ellipticity of $\sigma\sigma^*$] The function $\sigma\sigma^*$ satisfies the uniform ellipticity hypothesis:
\begin{equation*}
\exists \Lambda >1,\ \forall \zeta \in \mathbb{R}^{2d},\quad \Lambda^{-1}|\zeta|^2\leq \left[\sigma \sigma^*(t,x_1,x_2)\zeta\right] \cdot \zeta  \leq \Lambda |\zeta|^2,
\end{equation*}
for all $(t,x_1,x_2) \in  [0,T] \times \mathbb{R}^d \times \mathbb{R}^d$.
\item[(H3-a) Differentiability and regularity of  $\ x_1 \mapsto F_2(.,x_1,.)$]  For all $(t,x_2) \in  [0,T]\times \R^d$, the function $F_2(t,.,x_2):\ x_1 \mapsto F_2(t,x_1,x_2)$ is continuously differentiable and there exist $0<\alpha^1<1$ and a positive constant $ \bar{C}_2$ such that, for all $(t,x_2)$ in $[0,T] \times \mathbb{R}^d$ and $x_1,y_1$ in $\mathbb{R}^d$
\begin{eqnarray*} 
&&|D_{1}F_2(t,x_1,x_2) - D_{1}F_2(t,y_1,x_2)| \leq \bar{C}_2 |x_1-y_1|^{\eta}.
\end{eqnarray*} 
\item[(H3-b) Non-degeneracy of $(D_{1}F_2)(D_{1}F_2)^*$] There exists a closed convex subset $\mathcal{E} \subset {\rm GL}_d(\R)$  (the set of $d\times d$ invertible matrices with real coefficients) such that for all $t$ in $[0,T]$ and $(x_1,x_2)$ in $\R^{2d}$ the matrix $D_{1}F_2(t,x_1,x_2)$ belongs to $\mathcal{E}$. We emphasize that this implies that 
\begin{equation*}
\exists \bar{\Lambda} >1,\ \forall \zeta \in \mathbb{R}^{2d},\quad \bar{\Lambda}^{-1}|\zeta|^2\leq \left[(D_{1}F_2)(D_{1}F_2)^*(t,x_1,x_2)\zeta\right] \cdot \zeta  \leq \bar{\Lambda} |\zeta|^2,
\end{equation*}
for all $(t,x_1,x_2) \in  [0,T] \times \mathbb{R}^d \times \mathbb{R}^d$.
\end{description}

\begin{remarque}
The convexity assumption in \textbf{(H3-b)} could seems, at first sight, 
\end{remarque}

Here is the main result of this paper.
\begin{theoreme}\label{TH:theTH}
Under \textbf{(H)}, there exists a unique weak solution to \eqref{systemEDO}.
\end{theoreme}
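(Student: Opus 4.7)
The plan is to implement the Stroock-Varadhan martingale problem approach sketched in the introduction. Weak existence under \textbf{(H)} is essentially classical: since $\sigma\sigma^*$ is uniformly elliptic and all coefficients are continuous, mollifying the coefficients, constructing the associated classical strong solutions, and extracting a weak limit by tightness plus Skorokhod representation produces a solution to the martingale problem attached to $\mL$. The whole difficulty therefore lies in weak uniqueness, and I would prove it by showing that the one-dimensional marginals of any such solution are prescribed by the coefficients; uniqueness of the law itself then follows by the standard conditioning argument of \cite{stroock_multidimensional_1979}.

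To pin down marginals, I would exhibit, for each bounded test function $f$ in a sufficiently rich class $\mF$ on $[0,T]\times \R^{2d}$, a mild solution $u=u_f$ of the backward Cauchy problem
\begin{equation*}
(\p_t + \mL) u = f \quad \text{on } [0,T) \times \R^{2d}, \qquad u(T, \cdot) = 0,
\end{equation*}
which is bounded and continuous together with $D_{x_1} u$, and regular enough to be inserted into It\^o's formula along any weak solution $X$. Applying It\^o and taking expectation then yields the representation $u_f(0, x) = -\E\bigl[\int_0^T f(s, X^{0,x}_s)\, ds\bigr]$, so the left-hand side, which depends only on $(\mL, f)$, determines the integrated marginals of $X$; varying the horizon and letting $f$ range over $\mF$ gives uniqueness of all time marginals.

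The construction of $u$ proceeds by mollifying $(F_1, F_2, \sigma)$ into smooth sequences $(F_1^n, F_2^n, \sigma^n)$ preserving the constants of \textbf{(H)} uniformly in $n$, solving the classical Cauchy problem for $u^n$, and passing to the limit via Arzel\`a-Ascoli. This demands a priori estimates on $u^n$, $D_{x_1} u^n$ and their moduli of continuity in both $x_1$ and the degenerate variable $x_2$, depending only on the structural constants of \textbf{(H)}; with these in hand the locally uniform limit $u$ inherits the bounds, and the It\^o identity for $u^n$ passes to the limit, producing the required martingale. Establishing the a priori estimates is the main obstacle. I would attack it by a first-order parametrix expansion of $\mL^n$ frozen along the deterministic flow driven by the linearization of the drift around the current space-time point, in the spirit of Delarue-Menozzi \cite{delarue_density_2010}. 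Hypothesis \textbf{(H3-b)} ensures the frozen Kolmogorov-type operator is hypoelliptic, so its heat kernel is Gaussian but strongly anisotropic, smoothing like $t^{-1/2}$ in $x_1$ but only like $t^{-3/2}$ in $x_2$. The threshold $\beta_i^2 > 1/3$ is precisely what makes the iterated parametrix series converge: a H\"older increment of order $\beta$ in the degenerate direction, paired with the $t^{-3/2}$ derivative in $x_2$, yields a time-singularity that is integrable exactly when $\beta > 1/3$, matching the heuristic $\alpha > 1 - 1/\gamma$ with $\gamma = 3/2$ recalled in the introduction.
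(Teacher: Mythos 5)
Your proposal follows the same route as the paper: mollify the coefficients and solve the regularized backward Cauchy problem $(\partial_t + \mL^n)u^n = f$, establish $n$-uniform a priori estimates on $u^n$, $D_{x_1}u^n$, $D^2_{x_1}u^n$ and a H\"older modulus in $x_2$ via a first-order parametrix around the frozen, linearized hypoelliptic Kolmogorov-type operator (this is exactly Theorem \ref{TH:PDEres} and Section \ref{sec:pde}), extract a locally uniform limit by Arzel\`a-Ascoli, pass to the limit in It\^o's formula to obtain the martingale property \eqref{eq:mgprop}, and read off uniqueness of marginals (hence of the law via the regular conditional probability argument of Stroock and Varadhan). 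The only cosmetic differences are that the paper obtains weak existence by direct citation of Theorem 6.1.7 in \cite{stroock_multidimensional_1979} rather than by your mollification/tightness/Skorokhod argument (both are standard and equivalent), and that your signed identity $u_f(0,x)=-\E\bigl[\int_0^T f(s,X_s^{0,x})\,\d s\bigr]$ is in fact the correct one given $u(T,\cdot)=0$, while the paper's displayed formula has a harmless sign typo.
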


\section{PDE result and proof of Theorem \ref{TH:theTH}}\label{sec:pdeandproof}
Let us first begin by giving the smoothing properties of the PDE \eqref{eq:thepde}. Let $(F_1^n, F_2^n, a^n)_{n\geq 0}$ be a sequence of mollified coefficients (say infinitely differentiable with bounded derivatives of all order greater than 1) satisfying \textbf{(H)} uniformly in $n$ that converges to $(F_1, F_2, a)$ uniformly on $[0,T]\times \R^d\times \R^d$ (such an example of coefficients can be found in \cite{chaudru_strong_2012}). Let us denote by $(\mathcal{L}^n)_{n \geq 0}$ the associated sequence of regularized versions of the operator $\mathcal{L}$ defined by \eqref{gengen}. We have the following result.

\begin{theoreme}\label{TH:PDEres}
Let $\mathcal{F}$ be the set of 1-Lipschitz in space functions $f:[0,T]\times \R^d \times \R^d \to \R^d$. For each $n$, the PDE \eqref{eq:thepde} with $\mathcal{L}^n$ instead of $\mathcal{L}$ admits a unique classical solution $u^n$.

Moreover, there exist a positive $\mathcal{T}_{\ref{TH:PDEres}}$, a positive $\delta_{\ref{TH:PDEres}}$ and a positive $\nu$, depending on known parameters in \textbf{(H)} only, such that for all $T$ less tan $\mathcal{T}_{\ref{TH:PDEres}}$ the solution of the regularized PDE \eqref{eq:thepde} with source term $f$ satisfies:
\begin{equation}\label{eq:defdenormenu}
||D_{2}u^n|| +  ||D_{1}u^n||_{\infty} + ||D^2_{1}u^n||_{\infty} + ||D_{1}u^n||_{\nu} \leq CT^\delta,
\end{equation}
where 
\begin{equation}
||D_{1}u^n||_{\nu} = \sup_{t\in [0,T], x_1\in \R^d} \sup_{x_2\neq z_2} \frac{|D_{1}u^n(t,x_1,x_2)-D_{1}u^n(t,x_1,z_2)|}{|x_2 - z_2|+|x_2 - z_2|^{\beta_1^2} + |x_2 - z_2|^{\beta_2^2} + |x_2 - z_2|^\nu}.
\end{equation}
Moreover, each classical solution $u^n$ is uniformly bounded on every compact subset of $[0,T]\times \R^d \times \R^d$.
\end{theoreme}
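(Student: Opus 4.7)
The plan is to construct $u^n$ probabilistically and then extract uniform (in $n$) regularity through an anisotropic parametrix expansion around a frozen linearized Kolmogorov-type operator, in the spirit of \cite{delarue_density_2010}. Since the mollified coefficients are smooth with bounded first derivatives, the regularized SDE admits a unique strong solution $X^{n,t,x}$ and the Feynman--Kac formula $u^n(t,x) = -\EX{\int_t^T f(s, X^{n,t,x}_s)\,\d s}$ yields a classical solution of $(\p_t + \mL^n) u^n = f$ with terminal condition zero, locally bounded with at most linear growth; this settles the uniqueness statement and the last claim of the theorem. The hard work is to bound $D_1 u^n$, $D_1^2 u^n$, $D_2 u^n$ and the anisotropic Hölder modulus $\|D_1 u^n\|_\nu$ uniformly in $n$, because a direct sensitivity computation would involve the Lipschitz constants of $F_2^n$ in the degenerate variable $x_2$, which blow up as $n \to \infty$.

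For this I would freeze the system at each base point $(t, x)$: linearize $F_2^n$ in $x_1$ along the deterministic characteristic $\theta^{t,x}_{\cdot}$ of the frozen system and fix $a^n$ at $x$. Under \textbf{(H3-b)} the resulting Ornstein--Uhlenbeck--Kolmogorov operator $\tilde{\mL}^{t,x}$ has an explicit transition density $\tilde{p}^{t,x}(t,s,x,y)$ with pointwise derivative bounds $|D_1^k D_2^\ell \tilde{p}^{t,x}| \leq C (s-t)^{-k/2 - 3\ell/2} q$, where $q$ is an anisotropic Gaussian with the hypoelliptic scaling $|x_1| \sim (s-t)^{1/2}$, $|x_2| \sim (s-t)^{3/2}$ provided by \cite{delarue_density_2010}. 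The Duhamel identity reads
\begin{equation*}
u^n(t,x) = - \int_t^T \!\! \int_{\R^{2d}} \tilde{p}^{t,x}(t,s,x,y)\, f(s,y) \, \d y\, \d s + \int_t^T \!\! \int_{\R^{2d}} \tilde{p}^{t,x}(t,s,x,y) \bigl[(\mL^n - \tilde{\mL}^{t,x}) u^n\bigr](s,y) \, \d y \, \d s,
\end{equation*}
the crucial feature being that $(\mL^n - \tilde{\mL}^{t,x})$ vanishes along $\theta^{t,x}_{\cdot}$ to the Hölder orders provided uniformly in $n$ by \textbf{(H1)}--\textbf{(H3-a)}.

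The $L^\infty$ bounds on $D_1 u^n$ and $D_1^2 u^n$ follow by integration of the kernel derivatives against $f - f(s, \theta^{t,x}_s)$, using $\int D_{x_1} \tilde{p}^{t,x}\,\d y = 0$ and the Lipschitz regularity of $f$; this produces the $T^{1/2}$ and $T^\delta$ gains. The anisotropic Hölder modulus $\|D_1 u^n\|_\nu$ is handled by the standard diagonal/off-diagonal splitting according to whether $|x_2 - z_2|$ is below or above $(T-t)^{3/2}$, combined with Kolmogorov-type regularity of $\tilde{p}^{t,x}$. The main obstacle is the degenerate derivative $D_2 u^n$, since its kernel brings in the non-integrable weight $(s-t)^{-3/2}$. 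Here one must exploit the full hypoelliptic cancellation coming from the parametrix correction: $(\mL^n - \tilde{\mL}^{t,x})$ contains in particular $F_2^n(s,\cdot) - F_2^n(s,\theta^{t,x}_s) - D_1 F_2^n(\theta^{t,x}_s)(\cdot - \theta^{t,x}_s)$, bounded by $C|y_2 - \theta^{t,x;2}_s|^{\beta_2^2} + C|y_1 - \theta^{t,x;1}_s|^{1+\eta}$ thanks to \textbf{(H1)}--\textbf{(H3-a)}. The corresponding moment $\int |y_2 - \theta^{t,x;2}_s|^{\beta_2^2}\, |D_{x_2}\tilde{p}^{t,x}|\,\d y$ scales like $(s-t)^{-3/2 + 3\beta_2^2/2}$, and integrability in $s$ holds if and only if $\beta_2^2 > 1/3$: this is the precise origin of the threshold (a parallel analysis in the $x_1$-transversal mode explains why $\beta_1^2 > 1/3$ is also needed).

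Summing all contributions produces a closed inequality of the form $\|\cdot\|_{\text{seminorms}} \leq CT^\delta + CT^{\delta'}\|\cdot\|_{\text{seminorms}}$, and choosing $\mathcal{T}_{\ref{TH:PDEres}}$ small enough renders the second term absorbable, yielding \eqref{eq:defdenormenu} with constants independent of $n$.
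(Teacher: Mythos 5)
Your overall strategy — parametrix expansion around the frozen linearized Kolmogorov operator with the anisotropic Gaussian density and its $(s-t)^{-k/2-3\ell/2}$ derivative bounds, partial centering along the transport $\theta^{t,x}_\cdot$, diagonal/off-diagonal splitting of $|x_2-z_2|$ against $(s-t)^{3/2}$, and a final absorption argument on a short horizon — is the same as the paper's, and your analysis of the $F_2$ remainder correctly explains the $\beta_2^2>1/3$ threshold. However, there is a genuine gap in your treatment of the $D_2 u^n$ estimate, and it is precisely the hard part.

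When you hit the Duhamel integrand with $D_{x_2}$, the kernel contributes $(s-t)^{-3/2}$ and you must check integrability for \emph{every} term, not only the $F_2$-remainder. Your centering of $F_1$ around $F_1(s,\theta^{t,x}_s)$ in \emph{both} variables gives a moment of order $(s-t)^{\beta_1^1/2}$ from the $x_1$-Hölder piece, hence a total singularity $(s-t)^{-3/2+\beta_1^1/2}$, which is not integrable for $\beta_1^1<1$. Likewise the diffusion term $\frac12\Tr[(a-a(s,\theta^{t,x}_s))D_1^2 u^n]$ contributes $(s-t)^{-3/2+1/2}=(s-t)^{-1}$, again non-integrable. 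The paper handles these by a different mechanism: for $F_1$ it centers only in the degenerate variable $x_2$ (so $F_1-F_1(s,\cdot,\theta^2_{t,s}(\xi))$), and compensates the leftover $F_1(s,\cdot,\theta^2)-F_1(s,\theta)$ by pairing it with $D_1u^n-D_1u^n(s,\cdot,\theta^2)$, whose size is controlled precisely by the seminorm $\|D_1u^n\|_\nu$; for $a$ it uses an integration-by-parts identity (Lemma 3.5 of \cite{chaudru_strong_2012}) that trades one derivative from the kernel against $D_1 a$ and, again, against the $\nu$-Hölder modulus of $D_1u^n$, at the cost of a $(s-t)^{-2}$ prefactor that is offset by the higher moments. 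This is where the threshold $\nu<\inf_i\beta_i^2$ actually bites. In your write-up the seminorm $\|D_1u^n\|_\nu$ appears as a free-standing extra estimate, whereas in the paper it is forced into the $D_2u^n$ bound and the system of inequalities is genuinely coupled; without this coupling the absorption step cannot close. You should also note that the paper freezes $a$ along the full trajectory $\theta_{\tau,s}(\xi)$ rather than at the single point $x$, which is needed to keep the remainder of the correct Hölder order uniformly in $s$.
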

\begin{proof}
The proof of this result is postponed to Section \ref{sec:pde}.
\end{proof}

We are now in position to prove uniqueness of the martingale problem associated to \eqref{systemEDO}. Under our assumptions, it is clear from Theorem 6.1.7 of \cite{stroock_multidimensional_1979} that the system \eqref{systemEDO} has at least one weak solution (the linear growth assumption assumed here is not a problem to do so). 

Let $f:[0,T]\times \R^d \times \R^d \to \R^d$ be some 1-Lipschitz in space function, let $u^n$ be the classical solution of the regularized version of the PDE \eqref{eq:thepde} with source term $f$ and let $(X^1,X^2)$ be a weak solution of \eqref{systemEDO} starting from $x$ at time 0. Let now suppose that $T$ is less than $\mathcal{T}_{\ref{TH:PDEres}}$ given in Theorem \ref{TH:PDEres}.  Applying Itô's Formula on $u^n(t,X_t^1,X_t^2)$ we obtain that 
\begin{eqnarray*}
u^n(t,X_t^1,X_t^2) &=& u^n(0,x_1,x_2) + \int_0^t (\p_t + \mathcal{L}) u^n(s,X_s^1,X_s^2) \d s + \int_0^t D_x u^n(s,X_s^1,X_s^2)\mathcal{B}\sigma(s,X_s^1,X_s^2) \d B_s\\
&=& u^n(0,x_1,x_2) + \int_0^t (\p_t + \mathcal{L}^n)u^n(s,X_s^1,X_s^2) \d s + \int_0^t  (\mathcal{L}-\mathcal{L}^n) u^n(s,X_s^1,X_s^2) \d s\\
&& \quad +\int_0^t D_x u^n(s,X_s^1,X_s^2)\mathcal{B}\sigma(s,X_s^1,X_s^2) \d B_s\\
&=& u^n(0,x_1,x_2) + \int_0^t f(s,X_s^1,X_s^2) \d s + \int_0^t  (\mathcal{L}-\mathcal{L}^n) u^n(s,X_s^1,X_s^2) \d s\\
&& \quad +\int_0^t D_x u^n(s,X_s^1,X_s^2)\mathcal{B}\sigma(s,X_s^1,X_s^2) \d B_s,
\end{eqnarray*} 
since $u^n$ is the solution of the regularized version of \eqref{eq:thepde} and where we recall that $\mathcal{B}$ is the $2d \times d$ matrix: $\mathcal{B}=(\rm{Id}, 0_{\R^{d}\times \R^d})^*$. 

Thanks to Theorem \ref{TH:PDEres} and Arzelà -Ascoli Theorem, we know that we can extract a subsequence of $(u^n)_{n\geq 0}$ and $(D_{x_1}u^n)_{n\geq 0}$ that converge respectively to the function $u$ and $D_{x_1}u$ uniformly on compact subset of $[0,T] \times \R^d \times \R^d$. Thus, together with the uniform convergence of the regularized coefficients, we can deduce that
\begin{equation}\label{eq:mgprop}
\left(u(t,X_t^1,X_t^2) - \int_0^t f(s,X_s^1,X_s^2) \d s - u(0,x_1,x_2) \right)_{0\leq t \leq T},
\end{equation}
is a $\mathbb{P}$-martingale by letting the regularization procedure tend to the infinity.\\

 Let us now come back to the canonical space, and let  $\mathbb{P}$ and $\tilde{\mathbb{P}}$ be two solutions of the martingale problem associated to \eqref{systemEDO} with initial condition $(x_1,x_2)$ in $\R^d\times \R^d$. Thus, for all continuous in time and Lipschitz in space functions $f :[0,T]\times \R^d \times \R^d \to \R$ we have from \eqref{eq:mgprop} (recall that $u(T,\cdot,\cdot) = 0$),
\begin{equation*}
u(0,x_1,x_2) = \E_{\mathbb{P}}\left[ \int_0^T f(s,X_s^1,X_s^2) \d s\right] = \E_{\tilde{\mathbb{P}}}\left[ \int_0^T f(s,X_s^1,X_s^2) \d s\right],
\end{equation*}
so that the marginal law of the canonical process are the same under $\mathbb{P}$ and $\tilde{\mathbb{P}}$. We extend the result on  $\R^+$ thanks to regular conditional probabilities, see  \cite{stroock_multidimensional_1979} Chapter 6.2. Uniqueness then follows from Corollary 6.2.4 of  \cite{stroock_multidimensional_1979}.

\section{Counter example} \label{sec:counterexample}
As we said, we feel that this counter example does not reduce to our current setting. Hence, we wrote it in a general form in order to be adapted to different cases. Let $\mW$ be a random process with continuous path satisfying $(\mW_t,\ t \geq 0) = (-\mW_t,\ t \geq 0) $, for all $t\geq 0$: $t^{\gamma}\mW_1 = \mW_t$ and $\E|\mW_1| < +\infty$. Let $\alpha <1$ and $c_\alpha := (1-\alpha)^{1/(1-\alpha)}$. We suppose that $\mW$ and $\alpha$ are such that there exists a weak solution of
\begin{equation}\label{eq:peano}
X_t = x + \int_0^t \rm{sign}(X_s) |X_s|^{\alpha} \d s + \mW_t,\\
\end{equation}
for any $x \geq 0$ that satisfies Kolmogorov's criterion. Given  $0<\beta<1$ we define for any continuous path $Y$ from $\R^+$ to $\R$ the variable  $\tau(Y)$ as 
$$\tau(Y) = \inf\{ t \geq 0\ : \ Y_t \leq (1-\beta)c_\alpha t^{1/(1-\alpha)}\}.$$
We now have the following Lemma:

\begin{lemme}\label{lemme:ce}
Let $X$ be a weak solution of \eqref{eq:peano} starting from some $x>0$ and suppose that $\alpha < 1-1/\gamma$. Then, there exists a positive $\rho$, depending on $\alpha$, $\beta$, $\gamma$ and $\E |\mW_1|$ only such that 
\begin{eqnarray}\label{eq:ce}
\mathbb{P}_x(\tau(X) \geq \rho) \geq 3/4.
\end{eqnarray}
\end{lemme}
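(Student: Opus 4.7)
The plan is a deterministic-plus-probabilistic decomposition. The deterministic step translates the event $\{\tau(X) \le \rho\}$ into a large-deviation event for $\mW$; the probabilistic step uses the self-similarity of $\mW$ together with the sharp inequality $\alpha < 1-1/\gamma$ (equivalently $1/(1-\alpha) < \gamma$) to show this event is unlikely for $\rho$ small. Heuristically the exponent $1/(1-\alpha)$ governs the macroscopic growth of the extreme Peano solution, while $\gamma$ governs the fluctuations of $\mW$, and the assumption is precisely what makes the drift dominate in short time.

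For the deterministic step, let $\phi(t) := c_\alpha t^{1/(1-\alpha)}$ denote the extreme positive Peano solution; it satisfies $\phi' = \phi^\alpha$, hence $\int_0^t \phi(s)^\alpha \d s = \phi(t)$. Continuity of $\mW$ and $x > 0 = \phi(0)$ force $\tau(X)>0$, and on $(0,\tau(X))$ we have $X_s > (1-\beta)\phi(s) > 0$, hence $\sign(X_s)|X_s|^\alpha = X_s^\alpha \ge (1-\beta)^\alpha \phi(s)^\alpha$. Integrating the SDE yields, for all $t \le \tau(X)$,
\[
X_t \;\ge\; x + (1-\beta)^\alpha\phi(t) + \mW_t.
\]
Evaluating at $t = \tau(X)$ on the event $\{\tau(X) \le \rho\}$, where $X_{\tau(X)} = (1-\beta)\phi(\tau(X))$, and dropping the positive term $x$ produce $\mW_{\tau(X)} \le -\kappa\,\phi(\tau(X))$ with $\kappa := (1-\beta)^\alpha - (1-\beta) > 0$ by strict concavity of $u \mapsto u^\alpha$ on $(0,1)$. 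Therefore
\[
\{\tau(X) \le \rho\} \;\subseteq\; \Big\{\,\inf_{t \in (0,\rho]}\frac{\mW_t}{\phi(t)} \le -\kappa\,\Big\}.
\]

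For the probabilistic step, the self-similarity $(\mW_{\rho s})_{s\ge 0}\stackrel{d}{=}(\rho^\gamma\mW_s)_{s\ge 0}$ and $\phi(\rho s) = \rho^{1/(1-\alpha)}\phi(s)$ yield
\[
\inf_{t\in(0,\rho]}\frac{\mW_t}{\phi(t)} \;\stackrel{d}{=}\; \rho^{\gamma - 1/(1-\alpha)}\inf_{s\in(0,1]}\frac{\mW_s}{\phi(s)},
\]
and by the symmetry $\mW \stackrel{d}{=} -\mW$ it suffices to show that $M := \sup_{s\in(0,1]}|\mW_s|/\phi(s)$ satisfies a tail bound $\P(M \ge a) \le C/a$ as $a\to\infty$. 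To this end, I would decompose $(0,1] = \bigsqcup_{k \ge 0}(2^{-k-1}, 2^{-k}]$: on block $k$ one has $|\mW_s|/\phi(s) \le c_\alpha^{-1} 2^{(k+1)/(1-\alpha)} \sup_{u\in(2^{-k-1},2^{-k}]}|\mW_u|$, and by self-similarity the latter supremum is distributed as $2^{-k\gamma}L$ with $L := \sup_{u\in(1/2,1]}|\mW_u|$. Markov's inequality and a union bound then give $\P(M \ge a) \le C\,\E[L]/a$, the constant $C$ arising from the convergent geometric series $\sum_k 2^{-k(\gamma-1/(1-\alpha))}$ whose convergence is precisely the threshold condition. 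Setting $a := \kappa\rho^{-(\gamma-1/(1-\alpha))}$ and choosing $\rho$ small enough then produces $\P_x(\tau(X) \le \rho) \le 1/4$.

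The main obstacle is establishing $\E[L] < +\infty$ with a bound in terms of $\E|\mW_1|$, so that the resulting $\rho$ depends only on $\alpha,\beta,\gamma,\E|\mW_1|$ as claimed. Almost sure finiteness of $L$ follows from continuity of paths on the compact interval $(1/2,1]$; integrability should follow from a chaining argument along dyadic sub-scales of $(1/2,1]$, using the distributional bound $\E|\mW_s - \mW_{s'}| \le \E|\mW_s| + \E|\mW_{s'}| \le (s^\gamma + (s')^\gamma)\E|\mW_1|$ available from self-similarity, together with Kolmogorov's criterion (satisfied by the weak solution $X$ and transferable to $\mW$ since the drift of $X$ is locally bounded along paths close to $\phi$).
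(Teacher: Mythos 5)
Your deterministic step is essentially the paper's. You integrate the SDE on $[0,\tau(X)]$, lower-bound the drift by $(1-\beta)^\alpha\phi(s)^\alpha$, and conclude that hitting the barrier $(1-\beta)\phi$ at some time $\le\rho$ forces $\mW$ to drop below $-\kappa\phi$ with $\kappa=(1-\beta)^\alpha-(1-\beta)>0$; the paper does the same, with an intermediate parameter $\eta$ and the slightly smaller constant $\kappa/2$, but the event $\{\tau(X)<\rho\}\subset\{\exists t\in(0,\rho]:\mW_t\le -\tilde c_\alpha\,t^{1/(1-\alpha)}\}$ is identical in substance.

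Where you genuinely diverge is the probabilistic step, and this is where your proposal has a real gap. The paper reads its hypothesis $t^\gamma\mW_1=\mW_t$ literally, i.e.\ pathwise. Under that reading, once you substitute $|\mW_t|=t^\gamma|\mW_1|$ the existential quantifier over $t\in(0,\rho]$ is over a \emph{deterministic} monotone function of $t$: $\{\exists t\in(0,\rho]:t^\gamma|\mW_1|\ge\tilde c_\alpha t^{1/(1-\alpha)}\}=\{|\mW_1|\ge\tilde c_\alpha\rho^{-\delta}\}$ with $\delta=\gamma-1/(1-\alpha)>0$, because $t\mapsto t^{-\delta}$ attains its minimum on $(0,\rho]$ at $t=\rho$. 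Markov applied to the single random variable $|\mW_1|$ finishes, and the resulting $\rho$ visibly depends only on $\alpha,\beta,\gamma,\E|\mW_1|$. No supremum of a random process over a time interval ever needs to be controlled. You instead assume only process-level distributional self-similarity and therefore must bound $\P\big(\sup_{s\in(0,1]}|\mW_s|/\phi(s)\ge a\big)$, which after dyadic decomposition requires $\E\big[\sup_{u\in(1/2,1]}|\mW_u|\big]<\infty$ with a bound in terms of $\E|\mW_1|$. You acknowledge this, but the gesture toward chaining does not close it: chaining typically needs moment assumptions on increments strictly stronger than the first-moment bound $\E|\mW_s-\mW_{s'}|\le(s^\gamma+(s')^\gamma)\E|\mW_1|$ that you invoke, and one cannot in general deduce integrability of a running supremum of a continuous self-similar process from integrability of its one-dimensional marginals alone. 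Under the paper's pathwise hypothesis your $L$ would collapse to $|\mW_1|$ and the gap would disappear --- but then the whole dyadic machinery is unnecessary. As written, your probabilistic step is a longer route with an unproven lemma at its core; you should either adopt the paper's pathwise use of the scaling identity, or add a hypothesis (e.g.\ $\E\sup_{u\le1}|\mW_u|<\infty$, or a Kolmogorov-type moment bound on increments of $\mW$ itself) that actually yields the tail estimate you need.
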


We are now in position to give our counter-example. Note that if $X$ is a weak solution of \eqref{eq:peano} with the initial condition $x=0$, then, $-X$ is also a weak solution of \eqref{eq:peano}. So that, if uniqueness in law holds $X$ and $-X$ have the same law.

Let us consider a weak solution $X^n$ of \eqref{eq:peano} starting from $1/n$, $n$ being a positive integer. Since each $X^n$ satisfies Kolmogorov's criterion, the sequence of law $(\mathbb{P}_{1/n})_{n \geq 0}$ of $X^n$ is thigh, so that we can extract a converging subsequence $(\mathbb{P}_{1/n_k})_{k\geq 0}$ to $\mathbb{P}_0$, the law of the weak solution $X$ of \eqref{eq:peano} starting from 0. Since the bound in \eqref{eq:ce} does not depend on the initial condition we get that 
$$\mathbb{P}_0(\tau(X) \geq \rho) \geq 3/4,$$
and, thanks to uniqueness in law
$$\mathbb{P}_0(\tau(-X) \geq \rho) \geq 3/4,$$
which is a contradiction. Choosing $\mW = \int_0^\cdot W_s \d s$, so that $\gamma = 3/2$, we get that weak uniqueness fails as soon as 
$$\alpha < 1-1/\gamma = 1/3.$$
We now prove Lemma \ref{lemme:ce} which allows to understand how the threshold above, exhibited in the introduction, also appears in our counter-example.

\begin{proof}[Proof of Lemma \ref{lemme:ce}] Let $X$ be a weak solution of \eqref{eq:peano} starting from $x>0$. Since it has continuous path, we have almost surely that $\tau(X)>0$. Then, note that on $[0,\tau(X)]$ we have:
\begin{eqnarray*}
X_t &=& x + \int_0^t \rm{sign}(X_s) |X_s|^{\alpha} \d s + \mW_t\\
&\geq & (1-\beta)^{\alpha} c_{\alpha} t^{1/(1-\alpha)} + \mW_t.
\end{eqnarray*}
Hence, choosing $\eta$ such that $(1-\eta) = [(1-\beta)^{\alpha}+(1-\beta)]/2$ we get that:

\begin{eqnarray*}
X_t  &\geq & (1-\eta)c_\alpha t^{1/(1-\alpha)}+ (\beta-\eta) c_\alpha t^{1/(1-\alpha)} + \mW_t,
\end{eqnarray*}
for all $t$ in $[0,\tau(X)]$.

Now let $\rho$ be a positive number, set $\tc_\alpha = (\beta-\eta) c_\alpha$ and 
$$A = \left\{ \tc_\alpha t^{1/(1-\alpha)} + \mW_t > 0 \text{ for all } t \text{ in } (0,\rho]\right\}.$$ 
Note that on $A$ we have
\begin{eqnarray*}
X_t  \geq (1-\eta)c_\alpha t^{1/(1-\alpha)} \geq  (1-\beta)c_\alpha t^{1/(1-\alpha)}
\end{eqnarray*}
for all $t$ in $[0,\tau(X)]$. But this is compatible only with the event $\{\tau(X) \geq \rho\}$ so that $A \subset \{\tau(X) \geq \rho\}$. Hence
\begin{equation}
\mathbb{P}(\tau(X) \geq \rho) \geq \mathbb{P}(A).
\end{equation}

We are now going to bound from below the probability of the event $A$. We have
\begin{eqnarray*}
\mathbb{P}(A^c) &=& \mathbb{P} \left(\exists t \in (0,\rho]\ :\  \tc_\alpha t^{1/(1-\alpha)} + \mW_t \leq 0 \right)\\
&\leq & \mathbb{P} \left(\exists t \in (0,\rho]\ :\   |\mW_t | \geq  \tc_\alpha t^{1/(1-\alpha)}  \right)\\
&=& \mathbb{P} \left(\exists t \in (0,1]\ :\   (\rho t)^{\gamma} |\mW_1| \geq \tc_\alpha (\rho t)^{1/(1-\alpha)}  \right)\\
&=& \mathbb{P} \left(\exists t \in (0,1]\ :\   |\mW_1| \geq  \tc_\alpha (\rho t)^{-\delta} \right),
\end{eqnarray*}
where $\delta = \gamma - 1/(1-\alpha)$. Since $\alpha < 1-1/\gamma$, we get that $\delta>0$ and we obtain from the previous computations that 

\begin{eqnarray*}
\mathbb{P}(A^c) \leq  \mathbb{P} \left(|\mW_1| \geq  \tc_\alpha \rho^{-\delta} \right) \leq  \E|\mW_1| \tc_\alpha^{-1} \rho^{\delta},
\end{eqnarray*}
from Markov inequality. Thus
\begin{eqnarray*}
\mathbb{P}(\tau(X) \geq \rho) \geq \mathbb{P}(A) \geq  1 - \E|\mW_1| \tc_\alpha^{-1} \rho^{\delta},
\end{eqnarray*}
so that there exists a positive $\rho$ such that
\begin{eqnarray*}
\mathbb{P}(\tau(X) \geq \rho) \geq 3/4.
\end{eqnarray*}
\end{proof}

\section{Smoothing properties of the PDE}\label{sec:pde}
This section is dedicated to the proof of Theorem \ref{TH:PDEres}. This proof is in the same spirit and uses the same tools as the one used in the work \cite{chaudru_strong_2012}. In the first subsection \ref{Subsec:froesys} we recall some of the results of this work that are useful for our proof and we refer to it, especially to the sections 3 and 4, for more details. Then, we prove Theorem \ref{TH:PDEres} in Subsection \ref{Subsec:estisol}.

Theorem \ref{TH:PDEres} concerns the solution of the regularized version of \eqref{eq:thepde}. Thus, for the sake of clarity, we forget the superscript $n$ that follows from the regularization procedure and we suppose throughout this section that the coefficients $F_1,F_2$ and $a:=\sigma\sigma^*$ are smooth (say infinitely differentiable with bounded derivative of all orders greater than one).  We then specify the dependence of the constants when necessary.\\

As said in the introduction, the proof follows from a first order parametrix expansion of the solution of \eqref{eq:thepde}. This parametrix expansion (see \cite{mckean_jr._curvature_1967},\cite{friedman_partial_1964}) allows to represent the solution as a perturbation of the solution of the PDE driven by the linearized and frozen version of the operator $\mL$ defined by \eqref{gengen}. The crucial point being that we have a good knowledge of the smoothing properties of the linearized and frozen version of $\mL$. Thanks to Feynman-Kack formulae, this allows to obtain a representation of the solution in term of the semi-group associated to the linearized and frozen operator which can be estimated.

We first present the frozen and linearized frozen system and then give the smoothing properties of the associated semi-group. Then, we give the representation of the solution in term of first order parametrix expansion and we estimate it. 

\subsection{The frozen system}\label{Subsec:froesys}
Given any frozen point $(\tau,\xi)$ in $[0,T]\times \R^{2d}$, we consider the following system on $[\tau,T]$
\begin{equation}\label{thetadef}
\left\lbrace\begin{array}{ll}
\displaystyle \frac{\d}{\d s}\theta_{\tau,s}^1(\xi) = F_1(s,\theta_{\tau,s}(\xi)),\quad \theta_{\tau,\tau}^1(\xi)=\xi_1,\\
\displaystyle  \frac{\d}{\d s}\theta_{\tau,s}^2(\xi) = F_2(s,\theta_{\tau,s}(\xi)),\quad \theta_{\tau,\tau}^2(\xi)=\xi_2,
\end{array}\right.
\end{equation}
which is well posed under our regularized framework and we extend the definition of its solution on $[0,\tau)$ by assuming that for all $ (v>r)$ in $[0,T]^2$, for all $\xi$ in $\R^{2d}$, $\theta_{v,r}(\xi)= 0$. Given the solution $(\theta_{\tau,s}(\xi))_{s \leq T}$ of this system, we define the linearized and frozen version of \eqref{systemEDO}:
\begin{equation}\label{LS}
\left\lbrace\begin{array}{ll}
 \d\tilde{X}^{1,t,x}_s = F_1(s,\theta_{\tau,s}(\xi)) \d s + \sigma(s,\theta_{\tau,s}(\xi)) \d W_s\\
 \d\tilde{X}^{2,t,x}_s = \left[F_2(s,\theta_{\tau,s}(\xi)) + D_{1}F_2(s,\theta_{\tau,s}(\xi))(\tilde{X}^{1,t,x}_s-\theta^1_{\tau,s}(\xi)) \right]\d s
\end{array}\right.
\end{equation}
for all $s$ in $(t,T]$, any $t$ in $[0,T]$, and for any initial condition $x$ in $\R^{2d}$ at time $t$. We then have the following Proposition.

\begin{proposition}[Chaudru de Raynal, \cite{chaudru_strong_2012}]\label{estfundsol}
Under our assumptions:

(i) There exists a unique (strong) solution of \eqref{LS}  with mean 
$$(m^{\tau,\xi}_{t,s})_{t \leq s \leq T} =(m^{1,\tau,\xi}_{t,s},m^{2,\tau,\xi}_{t,s})_{t \leq s \leq T},  $$
where
\begin{eqnarray}\label{meanGauss}
&&m^{1,\tau,\xi}_{t,s}(x) =  x_1 + \int_t^s F_1(r,\theta_{\tau,r}(\xi)) \d r,\\
&&m^{2,\tau,\xi}_{t,s}(x) = x_2+\int_t^s  \bigg[F_2(r,\theta_{\tau,r}(\xi)) + D_{1}F_2(r,\theta_{\tau,r}(\xi))(x_1-\theta_{\tau,r}^1(\xi)) \nonumber\\
&& \hphantom{m^{2,\xi}_{\tau,s}(x)} \quad +  D_{1}F_2(r,\theta_{\tau,r}(\xi))\int_t^r F_1(v,\theta_{\tau,v}(\xi))\d v \bigg] \d r,\nonumber
\end{eqnarray} 
and uniformly non-degenerate covariance matrix $(\tilde{\Sigma}_{t,s})_{t\leq s \leq T}$:

\begin{equation}\label{covmatrice}
\tilde{\Sigma}_{t,s}=\begin{pmatrix}
\int_t^s \sigma \sigma^*(r,\theta_{\tau,r}(\xi))\d r & \int_t^s R_{r,s}(\tau,\xi) \sigma \sigma^*(r,\theta_{\tau,r}(\xi))\d r\\
 \int_t^s \sigma \sigma^*(r,\theta_{\tau,r}(\xi))R^*_{r,s}(\tau,\xi) \d r &  \int_t^s R_{t,r}(\tau,\xi)\sigma \sigma^*(r,\theta_{\tau,r}(\xi)) R_{t,r}^*(\tau,\xi) \d r\\
\end{pmatrix},					
\end{equation}
where:
\begin{equation*}
R_{t,r}(\tau,\xi)=\left[\int_t^r D_{1}F_2(v,\theta_{\tau,v}(\xi))\d v\right],\quad t\leq r\leq s\leq T.
\end{equation*}

(ii) This solution is a Gaussian process with transition density:
\begin{eqnarray}\label{gtd}
\tilde{q}(t,x_1,x_2;s,y_1,y_2)  = \frac{3^{d/2}}{(2\pi)^{d/2}}  (\det[\tilde{\Sigma}_{t,s}])^{-1/2} \exp \left( -\frac{1}{2}|\tilde{\Sigma}_{t,s}^{-1/2} (y_1-m^{1,\tau,\xi}_{t,s}(x), y_2-m^{2,\tau,\xi}_{t,s}(x))^*  |^2\right),
\end{eqnarray}
for all $s$ in $(t,T]$.

(iii) This transition density $\tilde{q}$ is the fundamental solution of the PDE driven by $\tilde{\mL}^{\tau,\xi}$ and given by:
\begin{eqnarray}
\tilde{\mL}^{\tau,\xi} &:=& \frac{1}{2} Tr\left[a(t,\theta_{\tau,t}(\xi))D^2_{x_1}\right] +  \left[ F_1(t,\theta_{\tau,t}(\xi))\right] \cdot D_{x_1}  \nonumber\\
&& \quad + \left[F_2(t,\theta_{\tau,t}(\xi))+ D_{1} F_2(t,\theta_{\tau,t}(\xi))\left(x_1 - \theta^1_{\tau,t}(\xi)\right)\right] \cdot D_{x_2}. \label{frozgen}
\end{eqnarray}

(iv) There exist two positive constants $c$ and $C$, depending only on known parameters in \textbf{(H)}, such that
\begin{equation}\label{c1defqc}
\tilde{q}(t,x_1,x_2;s,y_1,y_2) \leq C\hat{q}_{c}(t,x_1,x_2;s,y_1,y_2),
\end{equation}
where
\begin{eqnarray*}
\hat{q}_{c}(t,x_1,x_2;s,y_1,y_2)= \frac{c}{(s-t)^{2d}}\exp \left( -c \left(\frac{\big|y_1-m^{1,\tau,\xi}_{t,s}(x)\big|^2}{s-t}+ \frac{\big|y_2-m^{2,\tau,\xi}_{t,s}(x)\big|^2}{(s-t)^{3}} \right)\right),
\end{eqnarray*}
and
\begin{eqnarray}\label{estidertranker}
\left| D_{x_1}^{N^{x_1}} D_{x_2}^{N^{x_2}} D_{y_1}^{N^{y_1}} \tilde{q}(t,x_1,x_2;s,y_1,y_2)\right|  \leq C (s-t)^{-[3N^{x_2} + N^{x_1} + N^{y_1}]/2} \hat{q}_c(t,x_1,x_2;s,y_1,y_2),
\end{eqnarray}
for all $s$ in $(t,T]$ and any integers $N^{x_1},N^{x_2},N^{y_1}$ less than 2.\\
\end{proposition}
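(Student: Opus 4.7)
The plan is to exploit the fact that, once the flow $\theta_{\tau,\cdot}(\xi)$ is frozen, system \eqref{LS} is a linear SDE with \emph{deterministic} coefficients, and hence defines a Gaussian process whose mean and covariance can be computed by explicit integration.

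For (i) and (ii), the first component solves a closed equation
$$\tilde X^{1,t,x}_r = x_1 + \int_t^r F_1(v,\theta_{\tau,v}(\xi))\d v + \int_t^r \sigma(v,\theta_{\tau,v}(\xi))\d W_v,$$
which is manifestly Gaussian. Substituting into the equation for $\tilde X^{2,t,x}$ and applying stochastic Fubini to swap the time integral against $D_1 F_2$ with the driving It\^o integral yields
$$\tilde X^{2,t,x}_s = m^{2,\tau,\xi}_{t,s}(x) + \int_t^s R_{r,s}(\tau,\xi)\sigma(r,\theta_{\tau,r}(\xi))\d W_r,$$
from which one reads off \eqref{meanGauss} and, via It\^o's isometry, \eqref{covmatrice}. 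The transition density \eqref{gtd} is then the standard Gaussian formula on $\R^{2d}$. Statement (iii) follows since a Gaussian transition kernel is automatically the fundamental solution of its own backward Kolmogorov equation, whose generator is exactly $\tilde{\mL}^{\tau,\xi}$; alternatively one can differentiate \eqref{gtd} directly.

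The heart of the proof is the Gaussian estimate (iv). I would introduce the anisotropic scaling matrix
$$\mathbb{D}_h := \mathrm{diag}\bigl(h^{1/2}{\rm Id}_d,\,h^{3/2}{\rm Id}_d\bigr),\qquad h := s-t,$$
and establish the two-sided bound $c\,\mathbb{D}_h^{2}\leq \tilde\Sigma_{t,s} \leq C\,\mathbb{D}_h^{2}$ uniformly in $\tau,\xi$. The upper bound is immediate from boundedness of $\sigma\sigma^*$ and of $D_1F_2$. The lower bound is the quantitative weak H\"ormander property: after the time change $r=t+hu$, $u\in[0,1]$, the $(2,2)$ block of $\mathbb{D}_h^{-1}\tilde\Sigma_{t,s}\mathbb{D}_h^{-1}$ becomes
$$\int_0^1 \bigl(h^{-1}R_{t,t+hu}(\tau,\xi)\bigr)\sigma\sigma^*(t+hu,\theta_{\tau,t+hu}(\xi))\bigl(h^{-1}R_{t,t+hu}(\tau,\xi)\bigr)^*\d u,$$
in which $h^{-1}R_{t,t+hu}\to u\,D_1F_2(t,\theta_{\tau,t}(\xi))$ as $h\downarrow 0$. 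The convexity of $\mathcal E$ in \textbf{(H3-b)} combined with \textbf{(H2)} guarantees that the limiting object is uniformly non-degenerate, and the H\"older regularity \textbf{(H3-a)} controls the error for finite $h$. This yields $(\det\tilde\Sigma_{t,s})^{-1/2}\le C h^{-2d}$ and, after absorbing off-diagonal cross terms in the exponent at the cost of a slight decrease of $c$, the Gaussian bound \eqref{c1defqc}.

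The derivative estimate \eqref{estidertranker} then follows by differentiating \eqref{gtd}. Using the identity $\tilde\Sigma_{t,s}^{-1} = \mathbb{D}_h^{-1}\bigl(\mathbb{D}_h^{-1}\tilde\Sigma_{t,s}\mathbb{D}_h^{-1}\bigr)^{-1}\mathbb{D}_h^{-1}$ with bounded middle factor, each application of $D_{y_1}$ or $D_{x_1}$ brings down a prefactor of order $h^{-1/2}$ (the size of one $\mathbb{D}_h^{-1}$ along the non-degenerate direction) and each $D_{x_2}$ a prefactor of order $h^{-3/2}$, the accompanying polynomial in the normalized deviation $\mathbb{D}_h^{-1}(y_1-m^1,y_2-m^2)^*$ being absorbed into the exponential at the price of a further infinitesimal reduction of $c$. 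Summing these scales yields the announced exponent $[3N^{x_2}+N^{x_1}+N^{y_1}]/2$. The main obstacle is clearly the lower bound on $\mathbb{D}_h^{-1}\tilde\Sigma_{t,s}\mathbb{D}_h^{-1}$: this is the step where the weak H\"ormander structure of the system must be quantified, and it is the content of the analogous estimates in \cite{chaudru_strong_2012}.
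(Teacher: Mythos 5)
The paper does not reprove this proposition: it is imported verbatim from \cite{chaudru_strong_2012} (as the bracketed attribution in the statement indicates), so there is no in-paper proof to compare against. That said, your sketch is correct and captures the standard route to such estimates in the Delarue--Menozzi / Chaudru de Raynal line: solve the non-degenerate component first, substitute, use stochastic Fubini to exhibit the second component as a Wiener integral with deterministic integrand, then read off the Gaussian law; for (iv), conjugate by the intrinsic scaling $\mathbb{D}_h=\mathrm{diag}(h^{1/2}\mathrm{Id},h^{3/2}\mathrm{Id})$ and obtain two-sided matrix bounds on $\mathbb{D}_h^{-1}\tilde\Sigma_{t,s}\mathbb{D}_h^{-1}$.

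One refinement to your discussion of the lower bound: you do not actually need the limit $h^{-1}R_{t,t+hu}\to u\,D_1F_2(t,\theta_{\tau,t}(\xi))$ together with an $h$-dependent error estimate. The point of the convexity (and closedness) hypothesis in \textbf{(H3-b)} is precisely that the time-average $(r-t)^{-1}R_{t,r}=(r-t)^{-1}\int_t^r D_1F_2(v,\theta_{\tau,v}(\xi))\,\d v$ lies in $\mathcal{E}$ for \emph{every} $r>t$, not only asymptotically, so the normalized $(2,2)$ block is uniformly non-degenerate at once, without invoking the H\"older modulus from \textbf{(H3-a)}. This makes the bound cleaner and removes any smallness-in-$h$ restriction. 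Also be aware that the covariance formula \eqref{covmatrice} as printed in the statement appears to contain a typo in the $(2,2)$ block ($R_{t,r}$ in place of $R_{r,s}$); your derivation, giving the $\int_t^s R_{r,s}\sigma\sigma^* R_{r,s}^*\,\d r$ form, is the internally consistent one.
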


We now define some notations that appear when writing and estimating the first order parametrix expansion of the regularized solution of the PDE \eqref{eq:thepde}.
\begin{definition}\label{def:defforparam}
For all $\zeta=(\zeta_1,\zeta_2)$ in $\R^{d} \times \R^d$ we introduce the perturbation operator $\Delta(\zeta)$ as 
\begin{equation}
\Delta(\zeta) : \R^{d} \times \R^d \ni (x_1,x_2) \mapsto (x_1-\zeta_1) + (x_2-\zeta_2) \in \R^d,
\end{equation}
and for $i=1,2$
\begin{equation}
\Delta^i(\zeta): \R^{d} \times \R^d \ni (x_1,x_2) \mapsto (x_i-\zeta_i) \in \R^d.
\end{equation}
Next we set for all measurable function $\varphi : [0,T] \times \R^d \times \R^d \to \R$, for all $t<s$ in $[0,T]^2$ and $\xi$ and $x$ in $\R^{2d}$:
\begin{equation}
\left[\tP_{t,s}^\xi \varphi\right](s,x) = \int_{\R^{2d}} \varphi(s,y) \tq(t,x;s,y) \d y,
\end{equation}
and 
\begin{equation}
\left[\hP_{t,s}^\xi \varphi\right](s,x) = \int_{\R^{2d}} \varphi(s,y) \hq_c(t,x;s,y) \d y,
\end{equation}
\end{definition}

Finally, we have the following Proposition from \cite{chaudru_strong_2012} regarding the smoothing properties of $\tP$ defined above: 
\begin{proposition}\label{prop:smootheffect} Suppose that assumptions \textbf{(HR)} hold. Then, there exist three positive constants $C, C'$ and $C''$, depending on known parameters in \textbf{(H)} only such that for all $t<s$ in $[0,T]^2$, $\xi$ and $x$ in $\R^{2d}$ and all measurable function $\varphi : [0,T] \times \R^d \times \R^d \to \R$:
\begin{enumerate}[(i)]
\item $\displaystyle \Big| D_{x_i} \left[\tP_{t,s}^\xi \varphi\right](s,x) \Big| \leq C' (s-t)^{-i+1/2}   \left[\hP_{t,s}^\xi \big|\varphi\big|\right](s,x),$
\item $\displaystyle \Big| D_{x_i} \left[\tP_{t,s}^\xi \varphi\right](s,x) \Big| \leq C'' (s-t)^{-i+1/2}   \left[\hP_{t,s}^\xi \big|\varphi-\varphi(\cdot,\zeta)\big|\right](s,x),$
\end{enumerate}
for $i=1,2$ and for all $\gamma$ in $(0,1]$:
\begin{equation}\label{eq:smootheffect}
\left[\hP_{t,s}^\xi |\Delta^i(\theta_{t,s}(\xi))|^\gamma\right](s,x)\bigg|_{(\tau,\xi)=(t,x)} \leq C''' (s-t)^{(i-1/2)\gamma}.
\end{equation}
\end{proposition}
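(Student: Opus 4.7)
The plan is to derive all three estimates directly from the Gaussian upper bounds and derivative estimates on $\tq$ stated in Proposition \ref{estfundsol}(iv), since the proposition is essentially a packaging of those bounds in the convenient semigroup form.

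For part (i), I would differentiate under the integral sign and invoke \eqref{estidertranker} with $(N^{x_1},N^{x_2},N^{y_1})=(1,0,0)$ (for $i=1$) and $(0,1,0)$ (for $i=2$). This yields
\begin{equation*}
\bigl| D_{x_i}\tq(t,x;s,y)\bigr| \leq C(s-t)^{-(2i-1)/2}\,\hq_c(t,x;s,y),
\end{equation*}
so $(s-t)^{-1/2}$ for $i=1$ and $(s-t)^{-3/2}$ for $i=2$, which is exactly the $(s-t)^{-i+1/2}$ scaling in the statement. Moving $|D_{x_i}|$ inside the integral and using the linearity of $\hP_{t,s}^\xi$ then gives (i).

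For part (ii), the key observation is that $y\mapsto \tq(t,x;s,y)$ is a probability density, so $[\tP_{t,s}^\xi \mathbf{1}](s,x)=1$ and hence $[\tP_{t,s}^\xi c](s,x)=c$ for any constant $c$. Applied with $c=\varphi(s,\zeta)$ this gives
\begin{equation*}
D_{x_i}[\tP_{t,s}^\xi \varphi](s,x) = D_{x_i}\bigl[\tP_{t,s}^\xi\bigl(\varphi-\varphi(\cdot,\zeta)\bigr)\bigr](s,x),
\end{equation*}
after which (ii) follows by exactly the same argument as (i) applied to $\varphi-\varphi(\cdot,\zeta)$.

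For the bound on $[\hP_{t,s}^\xi |\Delta^i(\theta_{t,s}(\xi))|^\gamma]$, I would first notice the crucial algebraic identity $m^{i,\tau,\xi}_{t,s}(x)\big|_{(\tau,\xi)=(t,x)}=\theta^i_{t,s}(x)$ for $i=1,2$. For $i=1$ this is immediate from the definition of $\theta^1$ and $m^{1,\tau,\xi}_{t,s}$ in \eqref{thetadef} and \eqref{meanGauss}. For $i=2$ one uses the cancellation $x_1-\theta^1_{t,r}(x)=-\int_t^r F_1(v,\theta_{t,v}(x))\d v$, so that the two extra terms in the definition of $m^{2,\tau,\xi}_{t,s}$ annihilate each other when $(\tau,\xi)=(t,x)$. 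Once the Gaussian kernel $\hq_c$ is recentered at $\theta_{t,s}(x)$, the standard change of variables $y_1 = \theta^1_{t,s}(x) + (s-t)^{1/2}z_1$, $y_2=\theta^2_{t,s}(x)+(s-t)^{3/2}z_2$, which matches the hypoelliptic scaling appearing in the exponential of $\hq_c$, reduces the integral to a constant Gaussian moment of order $\gamma$ multiplied by $(s-t)^{(i-1/2)\gamma}$.

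The only place one has to be careful is the mean identity in the third step; the rest is bookkeeping with the two-scale Gaussian kernel. Since the bounds are only used under the evaluation $(\tau,\xi)=(t,x)$, I would state the mean identity as a short lemma at the start of the proof and then carry out the three estimates in the order (i), (ii), \eqref{eq:smootheffect}.
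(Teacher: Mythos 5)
Your proposal is correct and follows essentially the same route as the paper: (i) is read off from \eqref{estidertranker}, (ii) uses that $\tq$ integrates to one so that the centering term $\varphi(\cdot,\zeta)$ contributes nothing after differentiation in $x$, and \eqref{eq:smootheffect} hinges on the identity $\theta_{t,s}(x)=m^{t,x}_{t,s}(x)$ followed by the hypoelliptic rescaling of the Gaussian kernel. Your explicit algebraic verification of the mean identity for $i=2$ (via the cancellation $x_1-\theta^1_{t,r}(x)=-\int_t^r F_1(v,\theta_{t,v}(x))\,\d v$) is a slightly more concrete version of the paper's appeal to ODE uniqueness, and the change-of-variables step plays the same role as the paper's use of the bound $\sigma^q e^{-\eta\sigma}\leq\bar C$.
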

\begin{proof}
Let us recall the basics arguments of the proof, since it will be used below. Assertion (i) follows from Proposition \ref{estfundsol} and Definition \ref{def:defforparam}. Since by definition of $\tP$ we have that the quantity $\left[\tP_{t,s}^\xi \varphi(\cdot,\zeta)\right](s,x)$ does not depend on $x$ we get that $D_{x_i}\left[\tP_{t,s}^\xi \varphi(\cdot,\zeta)\right](s,x)=0$ for $i=1,2$. Then, assertion (ii) follows from the following splitting:
$$\forall  \zeta \in \R^{2d},\  \varphi = \varphi  -\varphi(\cdot,\zeta) + \varphi(\cdot,\zeta).$$
The last assertion of the Proposition follows from the Gaussian decay of $\tq$. Indeed, by definition we have
\begin{eqnarray*}
\left[\hP_{t,s}^\xi |\Delta^i(\theta_{t,s}(\xi))|^\gamma\right](s,x) &=& \int_{\R^{2d}} |y_i-\theta^i_{t,s}(\xi)|^\gamma \hq_c(t,x;s,y) \d y\\
&=& \int_{\R^{2d}} \Bigg\{ (s-t)^{(i-1/2)\gamma}\left|\frac{y_i-\theta^i_{t,s}(\xi)}{(s-t)}\right|^\gamma \frac{c}{(s-t)^{2d}}\\
&& \quad \times \exp \left( -c \left(\frac{\big|y_1-m^{1,\tau,\xi}_{t,s}(x)\big|^2}{s-t}+ \frac{\big|y_2-m^{2,\tau,\xi}_{t,s}(x)\big|^2}{(s-t)^{3}} \right)\right) \Bigg\}\d y
\end{eqnarray*}
Note that for all $s$ in $[t,T]$, the mean $(m^{1,t,x}_{t,s}(x),m^{2,t,x}_{t,s}(x))$ satisfies the ODE \eqref{meanGauss} with initial data $(t,x)$. Hence, the forward transport function defined by \eqref{meanGauss} with the initial data $(\tau,\xi) = (t,x)$ is equal to the mean: $\theta_{t,s}(x)=m^{t,x}_{t,s}(x)$. We deduce the result by letting $(\tau,\xi)=(t,x)$ and by using the following inequality:
$$ \forall \eta>0,\ \forall q>0,\ \exists \bar{C}>0 \text{ s.t. } \forall \sigma >0,\ \sigma^{q} e^{-\eta\sigma}\leq \bar{C}.$$
\end{proof}

\subsection{Estimation of the solution}\label{Subsec:estisol}
Let us now expand the regularized solution of \eqref{eq:thepde} to a a first order parametrix: we rewrite this PDE as 
$$(\p_t + \tilde{\mathcal{L}}^{\tau,\xi}) u(t,x) = - \left(\mathcal{L}-\tilde{\mathcal{L}}^{\tau,\xi}\right)u(t,x) + f(t,x) ,$$
on $[0,T)\times \R^{2d}$ with terminal condition $0$ at time $T$. Thus, using the definitions given in the previous subsection, we obtain that for every $(t,x)$ in $[0,T]\times \R^{2d}$, the solution $u$ writes
\begin{eqnarray}\label{eq:expressionu}
u(t,x) &=&  -\int_t^T\Bigg\{\left[\tP_{t,s}^\xi f\right](s,x) + \left[\tP_{t,s}^\xi (F_1-F_1(s,\theta_{t,s}(\xi)))\cdot  D_{1}u\right](s,x) \notag\\
&& + \left[\tP_{t,s}^\xi (F_2-F_2(s,\theta_{t,s}(\xi))-D_1F_2(s,\theta_{t,s}(\xi))) \cdot D_{2}u\right](s,x)\notag\\
&& + \left[\tP_{t,s}^\xi \frac{1}{2}\Tr\left[(a-a(s,\theta_{t,s}(\xi))) D^2_{1}u\right]\right](s,x) \Bigg\} \d s,
\end{eqnarray}
by choosing $\tau =t$. We made this choice for the freezing time $\tau$ in the following.\\

We next assume without loss of generality that $T<1$. We are now in position to prove the main estimates of Theorem \ref{TH:PDEres}. This is done by proving the following results and then using circular argument (see Section 4 of \cite{chaudru_strong_2012}).
\begin{proposition}
There exists four positive constants $C_1$, $C_2$, $C'$ and $C''$, and three positive numbers $\delta$, $\delta'$ and $\delta''$, depending on known parameters in \textbf{(H)} only, such that:
\begin{eqnarray}
||D^n_{1}u||_{\infty} &\leq&   C_n T^{\delta} \left(||f||_{\Lip} + ||D_1 u ||_{\infty} +  ||D_2 u ||_{\infty} \right),\quad n=1,2,\\
||D_{2}u||_{\infty} &\leq &  C' T^{\delta'} \left(||f||_{\Lip} + ||D_1 u ||_{\nu} + ||D_2 u ||_{\infty} \right),\\
||D_1 u ||_{\nu} &\leq & C'' T^{\delta''}\left(||f||_{\Lip} + ||D_1 u ||_{\nu} + ||D_2 u ||_{\infty} \right),
\end{eqnarray}
where $||\cdot||_\nu$ is defined by \eqref{eq:defdenormenu} and for all $\nu$ such that:
\begin{equation}\label{eq:conditionnu}
\nu <\inf_{i=1,2}\beta_i^2.
\end{equation}
\end{proposition}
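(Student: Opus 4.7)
The plan is to start from the first-order parametrix representation \eqref{eq:expressionu} of the regularized solution $u$ and estimate each of the four terms separately, always with the freezing point chosen as $\xi = x$. Under this choice, $\theta_{t,s}(x)$ coincides with the mean of the frozen Gaussian process, so that at a generic integration point $y$ the centered coefficients $(F_1 - F_1(s,\theta_{t,s}(x)))$, $(F_2 - F_2(s,\theta_{t,s}(x)) - D_1 F_2(s,\theta_{t,s}(x))(y_1 - \theta^1_{t,s}(x)))$ and $(a - a(s,\theta_{t,s}(x)))$ are controlled by powers of $|y_i - \theta^i_{t,s}(x)|$. Via item (iii) of Proposition \ref{prop:smootheffect}, these weights translate into positive powers $(s-t)^{(i-1/2)\gamma}$ of the time increment once integrated against the heat kernel $\hat{q}_c$. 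Coupled with the derivative gain $(s-t)^{-i+1/2}$ from items (i)--(ii), this turns each integrand into $(s-t)^{-a+b}$ with $-a+b > -1$, whose time integral produces a positive power $T^\delta$.

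\textbf{Bounds on $\|D_1^n u\|_\infty$.} Differentiating \eqref{eq:expressionu} $n$ times in $x_1$ and applying items (i)--(ii) of Proposition \ref{prop:smootheffect}, the source term $f$ contributes $T^\delta\|f\|_{\mathrm{Lip}}$ (using that $f$ is Lipschitz and subtracting $f(s, \theta_{t,s}(x))$ to apply the centering trick). The $F_1 \cdot D_1 u$ and $F_2 \cdot D_2 u$ terms give $T^\delta(\|D_1 u\|_\infty + \|D_2 u\|_\infty)$ via the Hölder regularity of the coefficients, the crucial point being that $\beta_i^2 > 1/3$ ensures $3\beta_i^2/2 > 1/2$, hence integrability of $(s-t)^{-n/2 + 3\beta_i^2/2}$ on $[t,T]$. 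The $a \cdot D_1^2 u$ term produces $T^{(3-n)/2}\|D_1^2 u\|_\infty$ using the Lipschitz regularity of $a$; since the power of $T$ is strictly positive for $n \leq 2$, this contribution can be absorbed into the left-hand side by the standard circular argument, yielding the announced bound.

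\textbf{Bound on $\|D_2 u\|_\infty$.} Here, differentiating \eqref{eq:expressionu} in $x_2$ brings the strongest singularity $(s-t)^{-3/2}$ from item (i) of Proposition \ref{prop:smootheffect} with $i = 2$. The compensating powers from the centered coefficients $(F_1, F_2, a)$ in the $x_2$-direction are $(s-t)^{3\beta_i^j/2}$, integrable over $[t,T]$ precisely when $\beta_i^j > 1/3$, which is the critical threshold. The cross term $(F_1 - F_1(s,\theta_{t,s}(x))) \cdot D_1 u$ must be expanded around $y = x$: writing the integrand minus its value at $y=x$ as the sum of a Hölder increment on $F_1$ multiplied by $D_1 u(s,y)$, plus a Hölder increment on $D_1 u$ between $(y_1,y_2)$ and $(y_1,x_2)$ (and further between $(y_1,x_2)$ and $(x_1,x_2)$) times a Hölder-small $F_1$-factor, produces precisely the term where $\|D_1 u\|_\nu$ enters. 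The remaining self-referential quantities $\|D_2 u\|_\infty$ and $\|D_1^2 u\|_\infty$ carry positive $T$-powers and are absorbed through the circular argument jointly with the previous inequalities.

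\textbf{Bound on $\|D_1 u\|_\nu$ and main obstacle.} The trickiest step, and principal obstacle of the proof, is the Hölder seminorm of $D_1 u$ in the degenerate direction. Its estimation requires comparing $D_{x_1} u$ at two points $(t, x_1, x_2)$ and $(t, x_1, z_2)$. The plan is to differentiate \eqref{eq:expressionu} in $x_1$ and split the $s$-integral at the natural parabolic scale $(s-t) \sim |x_2 - z_2|^{2/3}$: on the diagonal regime $s - t \leq |x_2-z_2|^{2/3}$, each increment is bounded by twice the supremum of the corresponding integrand, using the $L^\infty$ estimates just established; on the off-diagonal regime, one invokes the mean value theorem in $x_2$ to transform the difference into an integral of $D_{x_1}D_{x_2}\tilde{q}$, paying the extra singularity $(s-t)^{-3/2}$ from item (i) but gaining the factor $|x_2-z_2|$. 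Collecting contributions and measuring them against the four denominators $|x_2-z_2|$, $|x_2-z_2|^{\beta_1^2}$, $|x_2-z_2|^{\beta_2^2}$ and $|x_2-z_2|^\nu$ appearing in \eqref{eq:defdenormenu} yields the third inequality. The critical point is that the Hölder exponent effectively produced by the off-diagonal analysis must be tunable to any $\nu < \inf_i \beta_i^2$, and that all self-referential appearances of $\|D_1 u\|_\nu$, $\|D_2 u\|_\infty$ and $\|D_1^2 u\|_\infty$ come with strictly positive $T$-powers so that the coupled circular argument on the triple of unknowns closes for $T$ small enough.
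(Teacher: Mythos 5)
Your overall skeleton is right (parametrix representation \eqref{eq:expressionu}, freezing at $\xi = x$ so that $\theta_{t,s}(x) = m^{t,x}_{t,s}(x)$, smoothing estimates of Proposition \ref{prop:smootheffect}, time splitting at the scale $(s-t)^{3/2}\sim|x_2-z_2|$ for the seminorm, circular argument to close). But two essential ideas in the paper's proof are missing, and without them the estimates as you describe them do not close.

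First, in the bound on $\|D_2 u\|_\infty$ you only discuss the $x_2$-direction compensation of the coefficients. When $D_{x_2}$ produces the singularity $(s-t)^{-3/2}$, the $x_1$-Hölder compensation of the centered $F_1$ and $a$ is $(s-t)^{\beta_1^1/2}$ and $(s-t)^{1/2}$ respectively, leading to $(s-t)^{-3/2+\beta_1^1/2}$ and $(s-t)^{-1}$, both non-integrable. The paper circumvents this by the integration-by-parts argument of Lemma 3.5 of \cite{chaudru_strong_2012} (quoted in \eqref{eq:ibp}): the coefficients are re-centered \emph{only in the $x_2$-direction}, i.e.\ at $(\cdot,\theta^2_{t,s}(\xi))$ rather than at $\theta_{t,s}(\xi)$, and the residual $x_1$-increment $a(\cdot,\theta^2_{t,s}(\xi)) - a(\theta_{t,s}(\xi))$ (resp.\ $F_1(\cdot,\theta^2_{t,s}(\xi)) - F_1(\theta_{t,s}(\xi))$) is paired with the $x_2$-increment of $D_1 u$, so that $\|D_1 u\|_\nu$ enters. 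In particular, after the IBP, $D_1^2 u$ disappears and only $D_1 u$ and its Hölder differences remain; this is what makes the time singularities integrable. Simply ``expanding around $y = x$'' as you propose does not produce this cancellation, and in fact centering at $y = x$ instead of at the forward transport $\theta_{t,s}(x)$ does not match the structure of the frozen operator.

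Second, in the Hölder seminorm estimate on the short-time set $\mathcal{S}^c = \{|x_2-z_2| > (s-t)^{3/2}\}$, you claim each term can be bounded by ``twice the supremum of the corresponding integrand''. This ignores that the freezing point has already been fixed at $\xi = (x_1,x_2)$, which is correct for the term evaluated at $(s,x_1,x_2)$ but \emph{not} for the one evaluated at $(s,x_1,z_2)$: the cancellation \eqref{eq:smootheffect} requires $(\tau,\xi) = (t,x)$ so that the mean of the frozen Gaussian coincides with the forward transport. The paper resolves this by re-centering each coefficient at $m^{2,x}_{t,s}(x_1,z_2)$ (the mean of the frozen flow from $(x_1,z_2)$) and then paying the price $|\theta_{t,s}^2(x) - m^{2,x}_{t,s}(x_1,z_2)| \leq |x_2 - z_2|$, which is exactly what produces the remaining $|x_2-z_2|^{\beta_i^2}$ contributions in the denominator of $\|\cdot\|_\nu$. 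Without this re-centering the estimate at $(s,x_1,z_2)$ simply cannot be obtained from Proposition \ref{prop:smootheffect}, and the bound on $\|D_1 u\|_\nu$ does not follow.
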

\begin{proof} The main strategy consists in estimating the time integrands of the representation \eqref{eq:expressionu} and then to invert the differentiation and integration operators. Let $n\in \{1,2\}$ and $s$ in $(t,T]$. We have from Proposition \ref{prop:smootheffect}:
\begin{eqnarray*}
&&\Bigg| D_{x_1}^n \Bigg\{\left[\tP_{t,s}^\xi f\right](s,x) +  \left[\tP_{t,s}^\xi (F_1-F_1(s,\theta_{t,s}(\xi))) \cdot D_{1}u\right](s,x) \\
&& \quad  + \left[\tP_{t,s}^\xi (F_2-F_2(s,\theta_{t,s}(\xi))-D_1F_2(s,\theta_{t,s}(\xi))) \cdot D_{2}u\right](s,x) \\
&& \quad + \left[\tP_{t,s}^\xi \frac{1}{2}\Tr\left[(a-a(s,\theta_{t,s}(\xi))) D^2_{1}u\right]\right](s,x)\Bigg\}\Bigg|\\
&& \leq C(s-t)^{-n/2}\Bigg\{    \left[\hP_{t,s}^\xi \big|f-f(s,\theta_{t,s}(\xi))\big|\right](s,x) +    \left[\hP_{t,s}^\xi \big|(F_1-F_1(s,\theta_{t,s}(\xi))) \cdot D_{1}u\big|\right](s,x)\\
&&+ \left[\hP_{t,s}^\xi \big|\frac{1}{2}\Tr\left[(a-a(s,\theta_{t,s}(\xi))) D^2_{1}u\right]\big|\right](s,x)\\
&& \quad  +    \left[\hP_{t,s}^\xi \big|(F_2-F_2(s,\theta_{t,s}(\xi))-D_1F_2(s,\theta_{t,s}(\xi))) \cdot D_{2}u\big|\right](s,x)\Bigg\}.
\end{eqnarray*}
By using the regularity of the coefficients assumed in \textbf{(H)} (and expanding $F_2$ around the forward transport $\theta$) we get that the right hand side above is bounded by

\begin{eqnarray*}
&&  C(s-t)^{-n/2}\Bigg\{    \left[\hP_{t,s}^\xi \big|\Delta(\theta_{t,s}(\xi))(\cdot)\big|\right](s,x) \\
&& \quad +   ||D_1u||_{\infty} \left[\hP_{t,s}^\xi \Big(\big|\Delta^1(\theta_{t,s}(\xi))(\cdot)\big|^{\beta_1^1}+\big|\Delta^2(\theta_{t,s}(\xi))(\cdot)\big|^{\beta_1^2}\Big)\right](s,x)\\
&&\quad +   ||D^2_1u||_{\infty} \left[\hP_{t,s}^\xi \Big(\big|\Delta^1(\theta_{t,s}(\xi))(\cdot)\big|+\big|\Delta^2(\theta_{t,s}(\xi))(\cdot)\big|\Big)\right](s,x)\\
&&\quad +    ||D_2u||_{\infty} \left[\hP_{t,s}^\xi \Big( \big|\Delta^1(\theta_{t,s}(\xi))(\cdot)\big|^{1+\eta}+\big|\Delta^2(\theta_{t,s}(\xi))(\cdot)\big|^{\beta_2^2}\Big)\right](s,x)\Bigg\}.
\end{eqnarray*}
By letting $\xi=x$ we obtain from estimate \eqref{eq:smootheffect} in Proposition \ref{prop:smootheffect} that 

\begin{eqnarray*}
&&\Bigg| D_{x_1}^n \Bigg\{\left[\tP_{t,s}^\xi f\right](s,x) +  \left[\tP_{t,s}^\xi (F_1-F_1(s,\theta_{t,s}(\xi))) \cdot D_{1}u\right](s,x) \\
&& \quad + \left[\tP_{t,s}^\xi (F_2-F_2(s,\theta_{t,s}(\xi))-D_1F_2(s,\theta_{t,s}(\xi))) \cdot D_{2}u\right](s,x)\\
&&\quad  +\left[\tP_{t,s}^\xi \frac{1}{2}\Tr\left[(a-a(s,\theta_{t,s}(\xi))) D^2_{1}u\right]\right](s,x) \Bigg\}\Bigg|\\
&& \leq C(s-t)^{-n/2} \bigg( ||f||_{\Lip}(s-t) + ||D_1u||_{\infty} \Big( (s-t)^{\beta_1^1/2} + (s-t)^{3\beta_1^2/2} \big)\\
&&\quad + ||D^2_1u||_{\infty} \Big( (s-t)^{1/2} + (s-t)^{3/2} \big) + ||D_2u||_{\infty} \Big( (s-t)^{(1+\eta)/2}  + (s-t)^{3\beta_2^2/2} \big)\bigg),
\end{eqnarray*}
where all the time-singularities in the right hand side are integrables. Therefore

\begin{eqnarray*}
|D^n_{x_1}u(t,x)| &\leq &  C T^{(n-1)/2} \bigg(T^{}||f||_{\Lip} 
+ ||D_1 u ||_{\infty} \Big(T^{\beta_1^1/2}+T^{3\beta_1^2/2}\Big)\\
&& + ||D^2_1 u ||_{\infty} \Big(T^{1/2}+T^{3/2}\Big) +  ||D_2 u ||_{\infty} \Big(T^{(1+\eta)/2}+T^{3\beta_2^2/2}\Big)\bigg).
\end{eqnarray*}

We now estimate the derivative of the solution in the degenerate direction. By using the integration by parts argument given in Lemma 3.5 of \cite{chaudru_strong_2012}, 
\begin{eqnarray}
&&\Bigg|D_{x_2}\left[\tP_{t,s}^\xi \frac{1}{2}\Tr\left[(a-a(s,\theta_{t,s}(\xi))) D^2_{1}u\right]\right](s,x) \Bigg|\label{eq:ibp}\\
&& \leq  (s-t)^{-3/2}\Bigg\{\left[\hP_{t,s}^\xi \Big|\frac{1}{2}\Tr\left[(a-a(s,\cdot,\theta^2_{t,s}(\xi))) D^2_{1}u\right]\Big|\right](s,x)\notag\\
&&\quad  + \left[\hP_{t,s}^\xi \Big| \frac{1}{2}\Tr\left[D_1a(s,\cdot,\theta_{t,s}^2(\xi)) (D_{1}u-D_1u(s,\cdot,\theta^2_{t,s}(\xi)))\right] \Big|\right](s,x)\Bigg\}\notag\\
&& \quad + (s-t)^{-2} \left[\hP_{t,s}^\xi \Big|\frac{1}{2}\Tr\left[(a(s,\cdot,\theta_{t,s}^2(\xi)))-a(s,\theta_{t,s}(\xi))) (D_{1}u-D_1u(s,\cdot,\theta^2_{t,s}(\xi)))\right] \Big|\right](s,x).\notag
\end{eqnarray}

Thus

\begin{eqnarray*}
&&\Bigg| D_{x_2} \Bigg\{\left[\tP_{t,s}^\xi f\right](s,x) +  \left[\tP_{t,s}^\xi (F_1-F_1(s,\theta_{t,s}(\xi))) \cdot D_{1}u\right](s,x) \\
&& \quad  + \left[\tP_{t,s}^\xi (F_2-F_2(s,\theta_{t,s}(\xi))-D_1F_2(s,\theta_{t,s}(\xi))) \cdot D_{2}u\right](s,x) \\
&&\quad +\left[\tP_{t,s}^\xi \frac{1}{2}\Tr\left[(a-a(s,\theta_{t,s}(\xi))) D^2_{1}u\right]\right](s,x) \Bigg\}\Bigg|\\
&& \leq C(s-t)^{-3/2}\Bigg\{    \left[\hP_{t,s}^\xi \big|f-f(s,\cdot,\theta^2_{t,s}(\xi))\big|\right](s,x) +    \left[\hP_{t,s}^\xi \big|(F_1-F_1(s,\cdot,\theta^2_{t,s}(\xi))) D_{1}u\big|\right](s,x)\\
&&\quad +    \left[\hP_{t,s}^\xi \big|(F_1(s,\cdot,\theta^2_{t,s}(\xi)))-F_1(s,\theta_{t,s}(\xi))) (D_{1}u-D_{1}u(s,\cdot,\theta^2_{t,s}(\xi))\big|\right](s,x)\\
&& \quad+    \left[\hP_{t,s}^\xi \big|(F_2-F_2(s,\theta_{t,s}(\xi))-D_1F_2(s,\theta_{t,s}(\xi))) D_{2}u\big|\right](s,x) \\
&&\quad  +\left[\hP_{t,s}^\xi \left|\frac{1}{2}\Tr\left[(a-a(s,\cdot,\theta^2_{t,s}(\xi))) D_{1}^2u\right]\right|\right](s,x) +\left[\hP_{t,s}^\xi \Big|\frac{1}{2}\Tr\left[(a-a(s,\cdot,\theta^2_{t,s}(\xi))) D^2_{1}u\right]\Big|\right](s,x) \\
&&\quad  + \left[\hP_{t,s}^\xi \Big| \frac{1}{2}\Tr\left[D_1a(s,\cdot,\theta_{t,s}^2(\xi)) (D_{1}u-D_1u(s,\cdot,\theta^2_{t,s}(\xi)))\right] \Big|\right](s,x)\Bigg\}\\
&&\quad + (s-t)^{-2} \left[\hP_{t,s}^\xi \Big|\frac{1}{2}\Tr\left[(a(s,\cdot,\theta_{t,s}^2(\xi)))-a(s,\theta_{t,s}(\xi))) (D_{1}u-D_1u(s,\cdot,\theta^2_{t,s}(\xi)))\right] \Big|\right](s,x).
\end{eqnarray*}

By using the regularity of the coefficients assumed in \textbf{(H)} we get that the right hand side above is bounded by

\begin{eqnarray*}
&&  C(s-t)^{-3/2}\Bigg\{    \left[\hP_{t,s}^\xi \big|\Delta^2(\theta_{t,s}(\xi))(\cdot)\big|\right](s,x) +  ||D_1u||_{\nu} \left[\hP_{t,s}^\xi \Big(\big|\Delta^1(\theta_{t,s}(\xi))(\cdot)\big|^{\beta_1^1}\big|\Delta^2(\theta_{t,s}(\xi))(\cdot)\big|^{\nu}\Big)\right](s,x)\\
&& +  ||D_1u||_{\infty} \left[\hP_{t,s}^\xi \Big(\big|\Delta^2(\theta_{t,s}(\xi))(\cdot)\big|^{\beta_1^2}\Big)\right](s,x)\\
&&  +    ||D_2u||_{\infty} \left[\hP_{t,s}^\xi \Big( \big|\Delta^1(\theta_{t,s}(\xi))(\cdot)\big|^{1+\eta}+\big|\Delta^2(\theta_{t,s}(\xi))(\cdot)\big|^{\beta_2^2}\Big)\right](s,x)\\
&& +||D^2_1u||_{\infty} \left[\hP_{t,s}^\xi \Big(\big|\Delta^2(\theta_{t,s}(\xi))(\cdot)\big|\Big)\right](s,x)  + ||D_1u||_{\nu}\bigg( \left[\hP_{t,s}^\xi \Big(\big|\Delta^2(\theta_{t,s}(\xi))(\cdot)\big|^{\nu}\Big)\right](s,x)\\
&&\quad  + (s-t)^{-1/2}\left[\hP_{t,s}^\xi \Big(\big|\Delta^1(\theta_{t,s}(\xi))(\cdot)\big|\big|\Delta^2(\theta_{t,s}(\xi))(\cdot)\big|^\nu\Big)\right](s,x) \bigg)\Bigg\}.
\end{eqnarray*}
By letting $\xi=x$ we obtain from estimate \eqref{eq:smootheffect} in Proposition \ref{prop:smootheffect} that 

\begin{eqnarray*}
&&\Bigg| D_{x_2} \Bigg\{\left[\tP_{t,s}^\xi f\right](s,x) +  \left[\tP_{t,s}^\xi (F_1-F_1(s,\theta_{t,s}(\xi))) \cdot D_{1}u\right](s,x) \\
&& \quad  + \left[\tP_{t,s}^\xi (F_2-F_2(s,\theta_{t,s}(\xi))-D_1F_2(s,\theta_{t,s}(\xi))) \cdot D_{2}u\right](s,x) \\
&&\quad +\left[\tP_{t,s}^\xi \frac{1}{2}\Tr\left[(a-a(s,\theta_{t,s}(\xi))) D^2_{1}u\right]\right](s,x) \Bigg\}\Bigg|\\
&& \leq C(s-t)^{-3/2} \bigg( ||f||_{\Lip}(s-t)^{3/2} + ||D_1u||_{\nu}  (s-t)^{\beta_1^1/2+3\nu/2} +||D_1u||_{\infty}  (s-t)^{3\beta_1^2/2} \\
&&\quad + ||D_2u||_{\infty} \Big( (s-t)^{(1+\eta)/2}  + (s-t)^{3\beta_2^2/2} \Big) + ||D_1u||_{\nu}  (s-t)^{1/2+3\nu/2} +||D_1^2u||_{\infty}  (s-t)^{3/2} \\
&& \qquad + ||D_1u||_{\nu} (s-t)^{3\nu/2} \bigg).
\end{eqnarray*}
Since $\beta_j^2>1/3$, and $\nu$ is constrained by \eqref{eq:conditionnu}, all the time-singularities of the right hand side above are integrables on $(t,T]$. Hence, we deduce from  \eqref{eq:expressionu} and the estimate above that there exists a positive $\delta'$, depending on known parameters in \textbf{(H)} only, such that:  

\begin{eqnarray*}
|D_{x_2}u(t,x)| &\leq &  C T^{\delta'} \bigg(||f||_{\Lip} + ||D_1 u ||_{\nu} + ||D_1u||_{\infty} +  ||D_2 u ||_{\infty} + ||D^2_1 u ||_{\infty} + ||D^2_1u||_{\infty}\bigg).
\end{eqnarray*}

Finally, we compute the Hölder semi norm of $D_{x_1}u$. Let $x_2\neq z_2$ belong to $\R^{d}$. We have from \eqref{eq:expressionu}:
\begin{eqnarray}
&&D_{x_1}u(t,x_1,x_2)-D_{x_1}u(t,x_1,z_2) \label{eq:holdedu}\\
&&= - D^n_{x_1}\int_t^T\Bigg\{\left[\tP_{t,s}^\xi f\right](s,x_1,x_2)-\left[\tP_{t,s}^\xi f\right](s,x_1,z_2) \notag\\
&&+ \left[\tP_{t,s}^\xi (F_1-F_1(s,\theta_{t,s}(\xi))) \cdot D_{1}u\right](s,x_1,x_2)-\left[\tP_{t,s}^\xi (F_1-F_1(s,\theta_{t,s}(\xi))) \cdot D_{1}u\right](s,x_1,z_2) \notag\\
&& + \left[\tP_{t,s}^\xi (F_2-F_2(s,\theta_{t,s}(\xi))-D_1F_2(s,\theta_{t,s}(\xi))) \cdot D_{2}u\right](s,x_1,x_2)\notag\\
&&\quad -\left[\tP_{t,s}^\xi (F_2-F_2(s,\theta_{t,s}(\xi))-D_1F_2(s,\theta_{t,s}(\xi))) \cdot D_{2}u\right](s,x_1,z_2) \notag\\
&& + \left[\tP_{t,s}^\xi \frac{1}{2}\Tr\left[(a-a(s,\theta_{t,s}(\xi))) D^2_{1}u\right]\right](s,x_1,x_2)- \left[\tP_{t,s}^\xi \frac{1}{2}\Tr\left[(a-a(s,\theta_{t,s}(\xi))) D^2_{1}u\right]\right](s,x_1,z_2)\Bigg\}\d s.\notag
\end{eqnarray}

We first estimate for any $s$ in $(t,T]$ the quantity:
\begin{eqnarray}\label{eq:targetholder}
&&\Bigg| D_{x_1}\left[\tP_{t,s}^\xi f\right](s,x_1,x_2)-D_{x_1}\left[\tP_{t,s}^\xi f\right](s,x_1,z_2) \\
&&+ D_{x_1}\left[\tP_{t,s}^\xi (F_1-F_1(s,\theta_{t,s}(\xi)))\cdot D_{1}u\right](s,x_1,x_2)-D_{x_1}\left[\tP_{t,s}^\xi (F_1-F_1(s,\theta_{t,s}(\xi))) \cdot D_{1}u\right](s,x_1,z_2)\notag \\
&& + D_{x_1}\left[\tP_{t,s}^\xi (F_2-F_2(s,\theta_{t,s}(\xi))-D_1F_2(s,\theta_{t,s}(\xi))) \cdot D_{2}u\right](s,x_1,x_2)\notag\\
&& -D_{x_1}\left[\tP_{t,s}^\xi (F_2-F_2(s,\theta_{t,s}(\xi))-D_1F_2(s,\theta_{t,s}(\xi))) \cdot D_{2}u\right](s,x_1,z_2) \notag \\
&& + D_{x_1}\left[\tP_{t,s}^\xi \frac{1}{2}\Tr\left[(a-a(s,\theta_{t,s}(\xi))) D^2_{1}u\right]\right](s,x_1,x_2)\notag\\
&& - D_{x_1}\left[\tP_{t,s}^\xi \frac{1}{2}\Tr\left[(a-a(s,\theta_{t,s}(\xi))) D^2_{1}u\right]\right](s,x_1,z_2)\Bigg|.\notag
\end{eqnarray}

To do this, we split the time interval w.r.t. the characteristic time-scale of the second space variable: let $\mathcal{S}:=\left\{s\in (t,T]:\ |x_2-z_2|\leq (s-t)^{3/2} \right\}$. Note that on $\mathcal{S}$ we have for any measurable function $\varphi:[0,T]\times \R^d \times \R^d \to \R$: 
\begin{eqnarray*}
&&\left|D_{x_1}\left[\tP_{t,s}^\xi \varphi \right](s,x_1,x_2)-D_{x_1}\left[\tP_{t,s}^\xi \varphi \right](s,x_1,z_2)\right|\\ 
&& \quad \leq \sup_{\lambda \in (0,1)} \left| D_{x_2}D_{x_1}  \left[\tP_{t,s}^\xi \varphi \right](s,x_1,\lambda x_2+ (1-\lambda)z_2)\right||x_2-z_2|\\
&& \quad \leq C (s-t)^{-2} \left[\hP_{t,s}^\xi |\varphi|\right](s,x_1,x_2)|x_2-z_2|\\
&& \quad \leq C (s-t)^{-1/2-3\nu/2} \left[\hP_{t,s}^\xi |\varphi|\right](s,x_1,x_2)|x_2-z_2|^{\nu},
\end{eqnarray*}
for every $0<\nu<1$. Hence, by using this estimate together with Proposition \ref{prop:smootheffect}, by repeating the computations done when estimating $D_{x_1}u$, we have that the quantity \eqref{eq:targetholder} is bounded on $\mathcal{S}$ by

\begin{eqnarray}\label{eq:estigolder1}
&&  C(s-t)^{-1/2}\Bigg\{    \left[\hP_{t,s}^\xi \big|\Delta(\theta_{t,s}(\xi))(\cdot)\big|\right](s,x) +   ||D_1u||_{\infty} \bigg[\hP_{t,s}^\xi \Big(\big|\Delta^1(\theta_{t,s}(\xi))(\cdot)\big|^{\beta_1^1}\\
&&+\big|\Delta^2(\theta_{t,s}(\xi))(\cdot)\big|^{\beta_1^2}\Big)\bigg](s,x_1,x_2) +    ||D_2u||_{\infty} \left[\hP_{t,s}^\xi \Big( \big|\Delta^1(\theta_{t,s}(\xi))(\cdot)\big|^{1+\eta}+\big|\Delta^2(\theta_{t,s}(\xi))(\cdot)\big|^{\beta_2^2}\Big)\right](s,x_1,x_2)\notag\\
&& +   ||D^2_1u||_{\infty} \left[\hP_{t,s}^\xi \Big(\big|\Delta^1(\theta_{t,s}(\xi))(\cdot)\big|+\big|\Delta^2(\theta_{t,s}(\xi))(\cdot)\big|\Big)\right](s,x)\Bigg\}.\notag
\end{eqnarray}
Thus, by choosing $\xi=x$ we obtain that \eqref{eq:targetholder} is bounded on $\mathcal{S}$ by

\begin{eqnarray}\label{eq:holdedu1}
&&C'(s-t)^{-1/2-3\nu/2} \bigg( ||f||_{\Lip}(s-t)^{1/2} + ||D_1u||_{\infty} \Big( (s-t)^{\beta_1^1/2} + (s-t)^{3\beta_1^2/2} \big)\\
&&\quad + ||D^2_1u||_{\infty} \big( (s-t)^{1/2} + (s-t)^{3/2} \big) + ||D_2u||_{\infty} \big( (s-t)^{(1+\eta)/2}  + (s-t)^{3\beta_2^2/2} \big)\bigg),\notag
\end{eqnarray}
for all $\nu$ satisfying \eqref{eq:conditionnu}.

We now estimate \eqref{eq:targetholder} on $\mathcal{S}^c$. On a first hand, we have from the computations done when estimating $D_{x_1}u$ that:

\begin{eqnarray*}
&&\Bigg| D_{x_1} \Bigg\{\left[\tP_{t,s}^\xi f\right](s,x) +  \left[\tP_{t,s}^\xi (F_1-F_1(s,\theta_{t,s}(\xi))) \cdot D_{1}u\right](s,x) \\
&& \quad  + \left[\tP_{t,s}^\xi (F_2-F_2(s,\theta_{t,s}(\xi))-D_1F_2(s,\theta_{t,s}(\xi))) \cdot D_{2}u\right](s,x) \\
&&\quad + \left[\tP_{t,s}^\xi \frac{1}{2}\Tr\left[(a-a(s,\theta_{t,s}(\xi))) D^2_{1}u\right]\right](s,x)\Bigg\}\Bigg|\\
&& \leq   C(s-t)^{-1/2}\Bigg\{    ||f||_{\Lip}\left[\hP_{t,s}^\xi \big|\Delta(\theta_{t,s}(\xi))(\cdot)\big|\right](s,x) \\
&&+   ||D_1u||_{\infty} \left[\hP_{t,s}^\xi \Big(\big|\Delta^1(\theta_{t,s}(\xi))(\cdot)\big|^{\beta_1^1}+\big|\Delta^2(\theta_{t,s}(\xi))(\cdot)\big|^{\beta_1^2}\Big)\right](s,x)\\
&&+    ||D_2u||_{\infty} \left[\hP_{t,s}^\xi \Big( \big|\Delta^1(\theta_{t,s}(\xi))(\cdot)\big|^{1+\eta}+\big|\Delta^2(\theta_{t,s}(\xi))(\cdot)\big|^{\beta_2^2}\Big)\right](s,x)\\
&&+   ||D^2_1u||_{\infty} \left[\hP_{t,s}^\xi \Big(\big|\Delta^1(\theta_{t,s}(\xi))(\cdot)\big|+\big|\Delta^2(\theta_{t,s}(\xi))(\cdot)\big|\Big)\right](s,x)\Bigg\}.
\end{eqnarray*}
Since on $\mathcal{S}^c$ we have $1\leq (s-t)^{-3\nu/2} |x_2-z_2|^{\nu}$, by choosing $\xi=x$ and then using Proposition \ref{prop:smootheffect} it comes that

\begin{eqnarray}\label{eq:estiholder2}
&&\Bigg| D_{x_1} \Bigg\{\left[\tP_{t,s}^\xi f\right](s,x) +  \left[\tP_{t,s}^\xi (F_1-F_1(s,\theta_{t,s}(\xi))) \cdot D_{1}u\right](s,x) \\
&& \quad  + \left[\tP_{t,s}^\xi (F_2-F_2(s,\theta_{t,s}(\xi))-D_1F_2(s,\theta_{t,s}(\xi))) \cdot D_{2}u\right](s,x)\Bigg|\notag\\
&&\quad + \left[\tP_{t,s}^\xi \frac{1}{2}\Tr\left[(a-a(s,\theta_{t,s}(\xi))) \cdot D^2_{1}u\right]\right](s,x)\Bigg\}\Bigg|\notag\\
&& \leq  C(s-t)^{-1/2-3\nu/2}\Bigg\{    ||f||_{\Lip}\left[\hP_{t,s}^\xi \big|\Delta(\theta_{t,s}(\xi))(\cdot)\big|\right](s,x) +   ||D_1u||_{\infty} \bigg[\hP_{t,s}^\xi \Big(\big|\Delta^1(\theta_{t,s}(\xi))(\cdot)\big|^{\beta_1^1}\notag\\
&&+\big|\Delta^2(\theta_{t,s}(\xi))(\cdot)\big|^{\beta_1^2}\Big)\bigg](s,x_1,x_2) +    ||D_2u||_{\infty} \left[\hP_{t,s}^\xi \Big( \big|\Delta^1(\theta_{t,s}(\xi))(\cdot)\big|^{1+\eta}+\big|\Delta^2(\theta_{t,s}(\xi))(\cdot)\big|^{\beta_2^2}\Big)\right](s,x_1,x_2)\notag\\
&&+||D^2_1u||_{\infty} \left[\hP_{t,s}^\xi \Big(\big|\Delta^1(\theta_{t,s}(\xi))(\cdot)\big|+\big|\Delta^2(\theta_{t,s}(\xi))(\cdot)\big|\Big)\right](s,x) \Bigg\}|x_2-z_2|^{\nu}.\notag
\end{eqnarray}
 We emphasize that all the time singularity above are again integrables provided $\nu$ satisfies \eqref{eq:conditionnu}. It thus only remains to estimate the last part of \eqref{eq:targetholder} on $\mathcal{S}^c$, namely

\begin{eqnarray*}
&&\Bigg| D_{x_1} \Bigg\{\left[\tP_{t,s}^\xi f\right](s,x_1,z_2) +  \left[\tP_{t,s}^\xi (F_1-F_1(s,\theta_{t,s}(\xi))) \cdot D_{1}u\right](s,x_1,z_2) \\
&& \quad  + \left[\tP_{t,s}^\xi (F_2-F_2(s,\theta_{t,s}(\xi))-D_1F_2(s,\theta_{t,s}(\xi))) \cdot D_{2}u\right](s,x_1,z_2)\\
&&\qquad +\left[\tP_{t,s}^\xi \frac{1}{2}\Tr\left[(a-a(s,\theta_{t,s}(\xi))) D^2_{1}u\right]\right](s,x_1,z_2) \Bigg\}\Bigg|.
\end{eqnarray*}
The main issue here is that the estimate of Proposition \ref{prop:smootheffect} can not be applied immediately, the semi-group being evaluating at point $(s,x_1,z_2)$ and the freezing point being previously chosen as $\xi=(x_1,x_2)$. The main idea consists in re-centering all the terms above and taking advantage on the fact that $|\theta_{t,s}^2(x) - m^{2,x}_{t,s}(x_1,z_2)| \leq |x_2-z_2|$.\\

Let us first begin with the term $D_{x_1}\left[\tP_{t,s}^\xi (F_1-F_1(s,\theta_{t,s}(\xi))) \cdot D_{1}u\right](s,x_1,z_2)$. Splitting first $(F_1-F_1(s,\theta_{t,s}(\xi))) \cdot D_{1}u$ as 
$$\Big(F_1-F_1(s,\theta^1_{t,s}(\xi),m^{2,x}_{t,s}(x_1,z_2))\Big) \cdot D_{1}u + \Big(F_1(s,\theta^1_{t,s}(\xi),m^{2,x}_{t,s}(x_1,z_2))-F_1(s,\theta_{t,s}^1(\xi),\theta_{t,s}^2(\xi))\Big) \cdot D_{1}u,$$
we get
\begin{eqnarray*}
&&\left|D_{x_1}\left[\tP_{t,s}^\xi (F_1-F_1(s,\theta_{t,s}(\xi))) \cdot D_{1}u\right](s,x_1,z_2)\right| \\
&&\leq C(s-t)^{-1/2}||D_{1}u||_\infty \bigg[\hP_{t,s}^\xi \Big(|\Delta^1(\theta_{t,s}(\xi))|^{\beta_1^1} + |\Delta^2(m^{2,x}_{t,s}(x_1,z_2))|^{\beta_1^2}\\
&&\qquad  + |\theta_{t,s}^2(\xi) -m^{2,x}_{t,s}(x_1,z_2) |^{\beta_1^2}\Big) \bigg](s,x_1,z_2)\\
&& \leq C'(s-t)^{-1/2} ||D_1u||_{\infty} \Big( (s-t)^{\beta_1^1/2} + (s-t)^{3\beta_1^2/2} + |x_2-z_2|^{\beta_1^2} \big).
\end{eqnarray*}

Next we split $(F_2-F_2(s,\theta_{t,s}(\xi))-D_1F_2(s,\theta_{t,s}(\xi))) \cdot D_{2}u$ as 

$$\Big(F_2-F_2(s,\cdot,m^{2,x}_{t,s}(x_1,z_2))\Big) \cdot D_{2}u +  \Big(F_2(s,\cdot,m^{2,x}_{t,s}(x_1,z_2)) -    F_2(s,\theta_{t,s}(\xi))-D_1F_2(s,\theta_{t,s}(\xi))\Big) \cdot D_{2}u,$$
and we obtain

\begin{eqnarray*}
&&\left|D_{x_1}\left[\tP_{t,s}^\xi (F_2-F_2(s,\theta_{t,s}(\xi))-D_1F_2(s,\theta_{t,s}(\xi))) \cdot D_{2}u\right](s,x_1,z_2)\right| \\
&&\leq C(s-t)^{-1/2}||D_{2}u||_\infty \bigg[\hP_{t,s}^\xi \Big( |\Delta^2(m^{2,x}_{t,s}(x_1,z_2))|^{\beta_2^2} \\
&&\qquad + |\Delta^1(\theta_{t,s}(\xi))|^{1+\eta}  + |\theta_{t,s}^2(\xi) -m^{2,x}_{t,s}(x_1,z_2) |^{\beta_2^2}\Big) \bigg](s,x_1,z_2)\\
&& \leq C'(s-t)^{-1/2} ||D_2u||_{\infty} \Big( (s-t)^{3\beta_2^2/2} + (s-t)^{(1+\eta)/2} + |x_2-z_2|^{\beta_2^2} \big).
\end{eqnarray*}

Finally, we write $(1/2)\Tr\left[(a - a(s,\theta_{t,s}(\xi))D^2u_1\right]$ as 
$$\frac{1}{2}\Tr\left[(a-a(s,\theta^1_{t,s}(\xi),m^{2,x}_{t,s}(x_1,z_2))\Big) D_{1}^2u\right] + \frac{1}{2}\Tr\left[\Big(a(s,\theta_{t,s}^1(\xi),\theta_{t,s}^2(\xi))-a(s,\theta^1_{t,s}(\xi),m^{2,x}_{t,s}(x_1,z_2))\Big) D_{1}u\right],$$
and we obtain

\begin{eqnarray*}
&&\left|D_{x_1}\left[\tP_{t,s}^\xi\frac{1}{2}\Tr\left[(a - a(s,\theta_{t,s}(\xi))D^2u_1\right]\right](s,x_1,z_2)\right| \\
&&\leq C(s-t)^{-1/2}||D^2_{1}u||_\infty \bigg[\hP_{t,s}^\xi \Big(|\Delta^1(\theta_{t,s}(\xi))|\\
&&\qquad + |\Delta^2(m^{2,x}_{t,s}(x_1,z_2))|+ |\theta_{t,s}^2(\xi) -m^{2,x}_{t,s}(x_1,z_2) |\Big) \bigg](s,x_1,z_2)\\
&& \leq C'(s-t)^{-1/2} ||D^2_1u||_{\infty} \Big( (s-t)^{1/2} + (s-t)^{3/2} + |x_2-z_2| \Big).
\end{eqnarray*}

Hence, putting the previous estimates together, letting $\xi=x$ we get that on $\mathcal{S}^c$
\begin{eqnarray}\label{eq:estiholder3}
&&\Bigg| D_{x_1} \Bigg\{\left[\tP_{t,s}^\xi f\right](s,x_1,z_2) +  \left[\tP_{t,s}^\xi (F_1-F_1(s,\theta_{t,s}(\xi))) \cdot D_{1}u\right](s,x_1,z_2) \\
&& \quad  + \left[\tP_{t,s}^\xi (F_2-F_2(s,\theta_{t,s}(\xi))-D_1F_2(s,\theta_{t,s}(\xi))) \cdot D_{2}u\right](s,x_1,z_2)\notag\\
&& \quad +\left[\tP_{t,s}^\xi \frac{1}{2}\Tr\left[(a-a(s,\theta_{t,s}(\xi))) D^2_{1}u\right]\right](s,x_1,z_2)  \Bigg\}\Bigg|\notag\\
&&\leq C(s-t)^{-1/2} \Bigg\{ ||f||_{\Lip} \big((s-t)^{(1-3\nu)/2} +   (s-t)^{3(1-\nu)/2}\big) \notag\\
&&\quad + ||D_1u||_{\infty} \bigg( (s-t)^{(\beta_1^1-3\nu)/2} + (s-t)^{3(\beta_1^2-\nu)/2} + [x_2-z_2|^{\beta_1^2-\nu}\bigg)\notag\\
&&\quad +  ||D_2u||_{\infty}\bigg( (s-t)^{3(\beta_2^2-\nu)/2} + (s-t)^{(1+\eta-\nu)/2} + |x_2-z_2|^{\beta_2^2-\nu}\bigg)\Bigg\} |x_2-z_2|^\nu\notag\\
&&\quad + ||D^2_1u||_{\infty} \bigg( (s-t)^{(1-3\nu)/2} + (s-t)^{3(1-\nu)/2} + [x_2-z_2|^{1-\nu}\bigg),\notag
\end{eqnarray}
since $1\leq (s-t)^{-3\nu/2} |x_2-z_2|^\nu$ and this holds for all $\nu$ satisfying \eqref{eq:conditionnu}. Putting together estimates \eqref{eq:holdedu1}, \eqref{eq:estiholder2} and \eqref{eq:estiholder3}, we can invert the differentiation and integration operators in \eqref{eq:holdedu} and we deduce that there exists a positive $\delta''$ depending on known parameters in \textbf{(H)} only such that
$$||D_1u||_{\nu} \leq CT^{\delta''}(||D_1u||_{\infty}  + ||D_2u||_{\infty} + ||D^2_1u||_{\infty} ),$$
where $||\cdot||_{\nu}$ is defined in Theorem \ref{TH:PDEres}. 
\end{proof}

\textbf{Acknowledgment } I would like to thanks François Delarue and Mario Maurelli for valuable contribution to this work.

%\section{Remarque}
%La chaîne n'est pas faisable avec le parametrix: on ne peut gagner qu'un $t^{1/2+\epsilon}$ de regularité Hölder pour $D_{x_1}u$ par rapport à $x_3$. Si on cherche à estimer $D_{x_3}$, on fait sortir une singularité de $t^{3-1/2}=t^{-5/2}$ qui n'est compensée que par la dérive $t^{\beta_1^1/2}$ et de la régularité Hölder de $D_{x_1}u$: $t^{1/2+\epsilon}$. Il manque donc $1$... Par contre on peut traiter le cas où la chaîne est de ce type:
%\begin{equation}\label{systemEDO2}
%\left\lbrace \begin{array}{llll}
%\d X^{1}_t &= &F_{1}(t,X^n_t)\d t + \sigma(t,X_t^{n}) \d B_t ,\qquad & X^1_0=x_1,\\
%\d X^2_t &= &X_t^1 \d t,\qquad & X^2_0=x_2,\\
%\vdots\\
%\d X_t^{n-1} &= &X_{t}^{n-2} \d t,\qquad & X^{n-1}_0=x_{n-1},\\
%\d X^{n}_t &= & F_n(t,X^{n-1}_t,X^n_t)\d t,\qquad & X^n_0=x_n.
%\end{array}
%\right.
%\end{equation}
%C'est à dire la perturbation de la dérive d'une équation par la $(n-1)$-ième intégrale d'une diffusion. On a alors un seuil de régularité de minimal de $(n-3/2)/(n-1/2)$ et on peut encore écrire le contre exemple. 
%
%
%

\bibliographystyle{amsalpha}
\bibliography{Biblio}

\end{document}